\theoremstyle{plain}
\newtheorem{theorem}{Theorem}
\newtheorem{lemma}[theorem]{Lemma}
\newtheorem{corollary}[theorem]{Corollary}
\newtheorem{question}[theorem]{Question}
\newtheorem*{theorem*}{Theorem}
\theoremstyle{definition}
\theoremstyle{remark}
\newtheorem{remark}[theorem]{Remark}
\newtheorem*{remark*}{Remark}
\def\N{\ensuremath{\mathbb{N}}}
\def\R{\ensuremath{\mathbb{R}}}
\def\Z{\ensuremath{\mathbb{Z}}}
\def\ep{\varepsilon}
\def\E{\ensuremath{\mathbf{E}}}
\def\P{\ensuremath{\mathbf{P}}}
\def\Ind{\ensuremath{\mathbbm{1}}}
\renewcommand\Re{\operatorname{Re}}
\def\to{\rightarrow}
\def\tand{\ensuremath{\text{ and }}}
\def\tif{\ensuremath{\text{ if }}}
\def\tas{\ensuremath{\text{ as }}}
\title{The $\lambda$-invariant measures of subcritical Bienaym\'e--Galton--Watson processes}
\author{Pascal Maillard\thanks{
Laboratoire de Math\'ematiques d'Orsay, Univ. Paris-Sud, CNRS, Universit\'e Paris-Saclay, 91405 Orsay Cedex, France. e-mail: pascal dot maillard at u-psud dot fr. Partially supported by Grant ANR-14-CE25-0014 (ANR GRAAL).}}
\begin{document}
\maketitle
\begin{abstract} 
A $\lambda$-invariant measure of a sub-Markov chain is a left eigenvector of its transition matrix of eigenvalue $\lambda$. In this article, we give an explicit integral representation of the $\lambda$-invariant measures of subcritical Bienaym\'e--Galton--Watson processes killed upon extinction, i.e.\ upon hitting the origin. In particular, this characterizes all quasi-stationary distributions of these processes. Our formula extends the Kesten--Spitzer formula for the (1-)invariant measures of such a process and can be interpreted as the identification of its minimal $\lambda$-Martin entrance boundary for all $\lambda$. In the particular case of quasi-stationary distributions, we also present an equivalent characterization in terms of semi-stable subordinators.

Unlike Kesten and Spitzer's arguments, our proofs are elementary and do not rely on Martin boundary theory.

\bigskip

\noindent\textbf{Keywords:} Bienaym\'e--Galton--Watson process, invariant measure, Martin boundary, quasi-stationary distribution, Schr\"oder equation, semi-stable process.

\bigskip

\noindent\textbf{MSC2010:} primary: 60J80, 60J50, secondary: 39B12, 60G52.
\end{abstract}

\section{Results}

Let $Z = (Z_n)_{n\ge0}$ be a subcritical Bienaym\'e--Galton--Watson (BGW) process with offspring distribution of mean $m\in(0,1)$. Denote by $P$ the restriction of its transition matrix to $\N^*=\{1,2,\ldots\}$. Then $P$ is a sub-stochastic matrix, the transition matrix of the sub-Markov process \{$Z$ killed upon hitting $0$\}. A measure\footnote{Throughout the article, all measures are assumed to be locally finite unless explicitly stated.} $\nu$ on $\N^*$ is called a $\lambda$-invariant measure for $Z$ if it is a left eigenvector\footnote{As usual, we consider measures as row vectors and functions as column vectors.} of $P$ of eigenvalue $\lambda$, i.e. if 
\begin{equation}
\label{eq:nu_P}
\nu P = \lambda \nu.
\end{equation}
In terms of generating functions, if $F(z)$ denotes the generating function of the offspring distribution and $G(z) = \sum_{k=1}^\infty \nu(k)z^k$ the generating function of the measure $\nu$, then, supposing that $G(z)$ is finite for all $|z|<1$ (a fact which follows from Lemma~\ref{lem:tail} below), \eqref{eq:nu_P} is equivalent to
\begin{equation}
 \label{eq:gf_measure}
 G(F(z)) - G(F(0)) = \lambda G(z),\quad|z|<1.
\end{equation}

For $x\in\N^*$ denote by $\P_x$ the law of the process $Z$ starting from $Z_0 = x$ and by $\E_x$ expectation with respect to $\P_x$. Furthermore, for a measure $\nu$ on $\N^*$, write $\|\nu\| = \nu(\N^*)$. The following limit, called the \emph{Yaglom limit}, is known to exist \cite{Heathcote1967,Joffe1967} (see also \cite[p.\ 16]{Athreya1972}):
\begin{equation}
\label{eq:yaglom}
\nu_{\text{min}} = \lim_{n\to\infty} \P_1(Z_n \in \cdot\,|\,Z_n > 0) = \lim_{n\to\infty} \frac{\delta_1 P^n}{\|\delta_1 P^n\|}, 
\end{equation}
where the limit holds in the weak topology of measures on $\N^*$. Furthermore, the probability measure $\nu_{\text{min}}$ satisfies \eqref{eq:nu_P} with $\lambda = m$, i.e. it is an $m$-invariant probability measure of the process. In particular (see also \cite[Proposition~5]{Meleard2012}),
\begin{equation}
\label{eq:pn}
 \frac{\P_1(Z_{n+1} > 0)}{\P_1(Z_n>0)} = \frac{\|\delta_1 P^{n+1}\|}{\|\delta_1 P^n\|} \to m,\quad\tas n\to\infty.
\end{equation}

We denote the generating function of the probability measure $\nu_{\text{min}}$ by 
\[
H(z)= \sum_{n=1}^\infty \nu_{\text{min}}(n)z^n,\quad |z|\le 1.
\]
%
Our main theorem is the following, which identifies all $\lambda$-invariant measures of the BGW process $(Z_n)_{n\ge 0}$.
\begin{theorem}
\begin{enumerate}
 \item There exist no non-trivial (i.e. $\not\equiv 0$) $\lambda$-invariant measures for $Z$ with $\lambda < m$.
 \item The only $m$-invariant measures of $Z$ are multiples of the Yaglom limit $\nu_{\text{min}}$.
 \item Let $\alpha\in(-\infty,1)$. A measure $\nu$ on $\N^*$ is an $m^\alpha$-invariant measure for $Z$ if and only if its generating function $G(z) = \sum \nu(n)z^n$ satisfies
\begin{equation}
\label{eq:G_rep}
G(z) = \int_0^\infty (e^{(H(z)-1)x}-e^{-x})\frac 1 {x^\alpha}\,\Lambda(dx),\quad |z|<1,
\end{equation}
where $\Lambda$ is a locally finite measure on $(0,\infty)$ satisfying $\Lambda(A) = \Lambda(mA)$ for every Borel set $A\subset(0,\infty)$. The measure $\Lambda$ is uniquely determined from $\nu$. Moreover, for every such measure $\Lambda$, \eqref{eq:G_rep} defines the generating function of an $m^\alpha$-invariant measure for $Z$ with radius of convergence at least 1.
\end{enumerate}
\label{th:main}
\end{theorem}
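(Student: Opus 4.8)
\noindent The plan is to run everything through the Schr\"oder-type equation \eqref{eq:gf_measure} and a change of variables by the Koenigs function of $F$ at its fixed point $1$. First I would record that, applying \eqref{eq:gf_measure} to $H$ and evaluating at $z=1$, one gets $H(F(0))=1-m$, so that $K(z):=1-H(z)$ satisfies $K(F(z))=mK(z)$ with $K(0)=1$; thus $K$ is analytic on $\{|z|<1\}$ and continuous, strictly decreasing on $[0,1]$ with $K(1)=0$. Writing $\phi:=K^{-1}\colon(0,1]\to[0,1)$, one has $F(\phi(s))=\phi(ms)$ and $F(0)=\phi(m)$. For an $m^\alpha$-invariant $\nu$, Lemma~\ref{lem:tail} makes $G$ analytic on $\{|z|<1\}$ and \eqref{eq:gf_measure} valid, and the substitution $z=\phi(s)$ turns \eqref{eq:gf_measure} into
\[
g(ms)=m^\alpha g(s)+g(m),\qquad s\in(0,1],\qquad g:=G\circ\phi,
\]
with $g$ continuous, nonnegative, $g(1)=G(0)=0$; conversely such a $g$ gives a solution $G=g\circ K$ of \eqref{eq:gf_measure}, which corresponds to a measure precisely when it has nonnegative Taylor coefficients at $0$. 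This relation extends $g$ to $(0,\infty)$, with $g'(ms)=m^{\alpha-1}g'(s)$.

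For the ``if'' direction of (3) I would check directly that the right-hand side of \eqref{eq:G_rep} solves \eqref{eq:gf_measure} with $\lambda=m^\alpha$: using $K(F(z))=mK(z)$ and $K(F(0))=m$, $G(F(z))-G(F(0))=\int_0^\infty(e^{-mK(z)x}-e^{-mx})x^{-\alpha}\,\Lambda(dx)$, and the change of variable $x\mapsto x/m$, under which $\Lambda$ is invariant, produces the factor $m^\alpha$. Nonnegativity of the coefficients comes from $e^{(H(z)-1)x}-e^{-x}=e^{-x}\sum_{k\ge1}\tfrac{x^k}{k!}H(z)^k$ (each term has nonnegative coefficients since $H$ does), and convergence on $\{|z|<1\}$ --- hence radius of convergence at least $1$ --- from the bound $|e^{(H(z)-1)x}-e^{-x}|\le C(z)x$ near $0$ together with $\int_0^1 x^{1-\alpha}\,\Lambda(dx)<\infty$ (here $\alpha<1$ and the semistability of $\Lambda$ are used), and from the exponential decay near $x=\infty$ coming from $\Re K(z)=1-\Re H(z)>0$ against the logarithmic growth of $\Lambda$. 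For uniqueness of $\Lambda$, I would note that two semistable $\Lambda_i$ with the same $G$ give the same $g$, differentiate the identity $g(s)=\int_0^\infty(e^{-sx}-e^{-x})x^{-\alpha}\,\Lambda_i(dx)$ (valid for all $s\in(0,1]$, as $K$ maps $[0,1)$ onto $(0,1]$) to get $-g'(s)=\int_0^\infty e^{-sx}x^{1-\alpha}\,\Lambda_i(dx)$, and conclude that the Laplace transforms of the locally finite measures $x^{1-\alpha}\Lambda_i(dx)$ agree on $(0,1]$ and hence, by their scaling, on $(0,\infty)$, forcing $\Lambda_1=\Lambda_2$.

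The ``only if'' direction of (3) is the heart of the matter, and the main obstacle. Given an $m^\alpha$-invariant $\nu$, with $g$ extended to $(0,\infty)$, the key claim is that $-g'$ is completely monotone on $(0,\infty)$. Once this is known, Bernstein's theorem gives $-g'(s)=\int_0^\infty e^{-sx}\,\rho(dx)$; the identity $g'(ms)=m^{\alpha-1}g'(s)$ (with $m^{\alpha-1}>1$) forces $\rho(\{0\})=0$ and $\rho(mA)=m^{1-\alpha}\rho(A)$, so $\Lambda(dx):=x^{\alpha-1}\rho(dx)$ is locally finite with $\Lambda(A)=\Lambda(mA)$, and integrating $-g'$ from $s$ to $1$ (using $g(1)=0$) yields exactly \eqref{eq:G_rep}. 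The complete monotonicity of $-g'$ is where the nonnegativity of $\nu$ must be used globally rather than term by term; I would try to extract it from the identity $\nu=m^{-n\alpha}\nu P^n=m^{-n\alpha}\sum_{j\ge1}\nu(j)\,\delta_jP^n$: by the branching property $\delta_jP^n=(\delta_1P^n)^{*j}$, and for $j=j_n$ chosen so that $\|\delta_{j_n}P^n\|$ stays bounded away from $0$ and $\infty$ (using \eqref{eq:pn}), the measures $\delta_{j_n}P^n$ converge to the measure $\theta_x$ on $\N^*$ with generating function $e^{(H(z)-1)x}-e^{-x}$, a compound-Poisson measure based on $\nu_{\text{min}}$; since $x\mapsto\theta_x(k)$ is in $C_0((0,\infty))$ for each $k$, a vague-compactness argument (with tightness on compacts of $(0,\infty)$ furnished precisely by the tail bound of Lemma~\ref{lem:tail}) gives a subsequential limit $\mu^\ast$ of the associated measures $\mu_n$ with $\nu(k)=\int_0^\infty\theta_x(k)\,\mu^\ast(dx)$ for all $k$ and with $\mu^\ast$ automatically $m$-semistable (because $\mu_n(mA)$ and $\mu_{n-1}(A)$ differ by $m^{-\alpha}$); comparing generating functions then yields \eqref{eq:G_rep} with $\Lambda(dx)=x^\alpha\mu^\ast(dx)$. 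The delicate points are the uniform validity of the approximation $\delta_{j_n}P^n\approx\theta_{(\cdot)}$ and the tail control of $\nu$.

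Finally, (1) can be obtained without the representation: iterating \eqref{eq:gf_measure} gives $G(F^n(z))=\lambda^nG(z)+G(F(0))\tfrac{1-\lambda^n}{1-\lambda}$, so at $z=0$ the increasing sequence $G(F^n(0))$ is bounded by $\tfrac{G(F(0))}{1-\lambda}$, whence $\|\nu\|<\infty$ and $\|\nu\|\lambda^n=\|\nu\|-G(F^n(0))=\sum_k\nu(k)\bigl(1-F^n(0)^k\bigr)\ge\|\nu\|\,\|\delta_1P^n\|$, i.e.\ $\|\delta_1P^n\|\le\lambda^n$; but $\|\delta_1P^n\|=m^n\ell(n)$ with $\ell(n)^{1/n}\to1$ by \eqref{eq:pn}, so $\|\delta_1P^n\|/\lambda^n=(m/\lambda)^n\ell(n)\to\infty$ when $\lambda<m$, a contradiction. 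And (2) is the boundary case $\alpha=1$ of the analysis: there $g$ solves $g(ms)=mg(s)+g(m)$, so $g'(ms)=g'(s)$, and $-g'$ --- completely monotone (by the argument for (3)) and invariant under the dilation $s\mapsto ms$ --- must be constant; hence $g$ is affine, and $g(1)=0$ gives $g(s)=c(1-s)=cH(\phi(s))$, i.e.\ $G=cH$ and $\nu=c\,\nu_{\text{min}}$.
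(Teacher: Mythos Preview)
Your Koenigs-function reformulation is a pleasant repackaging, and several pieces are fine: the ``if'' direction of (3), the Laplace-transform uniqueness of $\Lambda$, and your independent argument for (1) are all correct (the last is in fact cleaner than the paper's route, which derives (1) only as a by-product of the full representation). But for the hard ``only if'' direction you correctly recognise that Bernstein's theorem is not a shortcut: proving complete monotonicity of $-g'$ is equivalent to proving the integral representation, and you end up proposing the same vague-compactness argument for $\mu_n(A)=m^{-\alpha n}\nu(p_n^{-1}A)$ that the paper uses. Two substantive points are missing there.

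First, your claim that ``tightness on compacts of $(0,\infty)$ [is] furnished precisely by the tail bound of Lemma~\ref{lem:tail}'' does not go through. Lemma~\ref{lem:tail} only yields $\nu([M^k,M^{k+1}))\le C m^{\beta k}$ for $\beta<\alpha$, and translating this to $\mu_n([a,b])$ produces a bound of order $m^{(\beta-\alpha)n}\to\infty$; the $\varepsilon$-loss in the exponent is fatal. The paper does \emph{not} get local boundedness of $\mu_n$ from Lemma~\ref{lem:tail}. Instead it uses the exact invariance identity $G(H_n^{-1}(z))=\int g_{z,n}(x)\,\mu_n(dx)$ (your $\nu=m^{-n\alpha}\nu P^n$, rewritten via generating functions) together with Lemma~\ref{lem:ineq}, which gives the two-sided comparison $g_{z,n}(x)\ge e^{(w-1)x}-e^{-x}$ for $w<z$, to obtain the uniform bound $\sup_n\int(e^{(w-1)x}-e^{-x})\,\mu_n(dx)<\infty$. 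This is weighted tightness of $xe^{-x}\mu_n(dx)$, not of $\mu_n$ itself, and it is exactly what is needed.

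Second, even once precompactness is in hand, a vague limit on $(0,\infty)$ may lose mass at $0$, and you cannot pass directly to the equality $\nu(k)=\int_0^\infty\theta_x(k)\,\mu^\ast(dx)$; a priori you only get $\ge$. The paper tracks this by compactifying at $0$: it takes weak limits of $xe^{-x}\mu_n(dx)$ on $[0,\infty)$, records the possible atom $\widetilde\mu(0)$, and obtains $G(z)=\int_0^\infty(e^{(H(z)-1)x}-e^{-x})\,\mu(dx)+\widetilde\mu(0)H(z)$. Plugging this back into \eqref{eq:gf_measure} then forces $\widetilde\mu(0)=0$ when $\alpha<1$ and $\mu=0$ when $\alpha\ge1$. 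Your treatment of (2) is affected by the same gap: you invoke ``complete monotonicity (by the argument for (3))'', but for $\alpha=1$ all the mass of $\mu_n$ escapes to $0$, so your version of the argument would yield $\mu^\ast=0$ and hence $G\equiv0$, missing precisely the solutions $G=cH$. If you do track the atom at $0$, your periodicity argument for (2) becomes correct: $-g'(s)=\int_0^\infty xe^{-sx}\,\mu(dx)+\widetilde\mu(0)$ is completely monotone and dilation-invariant, hence constant, forcing $\mu=0$ and $G=\widetilde\mu(0)\,H$.
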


\begin{remark}
 In the proof of Theorem~\ref{th:main}, the measure $x^{-\alpha}\Lambda(dx)$ will be constructed as the vague limit ($n\to\infty$) of the measures $\mu_n$ on $(0,\infty)$ defined by $\mu_n(A) = m^{-\alpha n}\nu(p_n^{-1}A)$, where $p_n = \P_1(Z_n > 0)$ and $A\subset(0,\infty)$ Borel.
\end{remark}

%
\begin{remark}
We will give an overview over the existing literature in Section~\ref{sec:history} but mention already here that Formula~\eqref{eq:G_rep} was obtained, in a slightly different form, for $\alpha=0$ and $F(z) = 1-m(1-z)$ (the ``pure death case'') by Kesten and Spitzer \cite{Spitzer1967} (giving credit to H.~Dinges for deriving it independently). It was later shown by Hoppe~\cite{Hoppe1977} that the case of general and even multitype offspring distributions (but still $\alpha=0$) can be reduced to the pure death case. One could adapt Hoppe's arguments for $\alpha\ne 0$, but we do not show this here. 
\end{remark}

\paragraph{Quasi-stationary distributions.}
A $\lambda$-invariant \emph{probability} measure $\nu$ (i.e., $\|\nu\| = 1$) is also called a \emph{quasi-stationary distribution} (QSD) of the process $Z$ with eigenvalue $\lambda$. The following result easily follows from Theorem~\ref{th:main}:

\begin{theorem}
 \label{th:qsd}
 A $\lambda$-invariant measure $\nu$ of the process $Z$ is finite if and only if $\lambda < 1$. In particular, $\nu$ is a QSD with eigenvalue $\lambda$ of the process $Z$ if and only if either
 \begin{enumerate}
  \item $\lambda = m$ and $\nu = \nu_{\text{min}}$, or
  \item $\lambda = m^\alpha$ for some $\alpha\in(0,1)$ and the generating function $G(z) = \sum \nu(n)z^n$ satisfies \eqref{eq:G_rep}, where $\Lambda$ is a locally finite measure on $(0,\infty)$ satisfying 
\begin{enumerate}
 \item $\Lambda(A) = \Lambda(mA)$ for every Borel set $A\subset(0,\infty)$ and
 \item $\int_0^\infty (1-e^{-x})x^{-\alpha}\,\Lambda(dx) = 1$.
\end{enumerate}
Furthermore, the measure $\Lambda$ in \eqref{eq:G_rep} is uniquely determined from $\nu$.
 \end{enumerate}
\end{theorem}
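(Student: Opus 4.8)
The plan is to derive Theorem~\ref{th:qsd} as a corollary of Theorem~\ref{th:main} by analyzing when the representation \eqref{eq:G_rep} yields a \emph{finite} measure, i.e.\ when $\lim_{z\uparrow 1}G(z) = \|\nu\| < \infty$. First I would dispose of the case $\lambda < m$ (impossible by part~1 of Theorem~\ref{th:main}) and the case $\lambda = m$ (here $\nu$ is a multiple of $\nu_{\text{min}}$ by part~2, which is a probability measure, so it is finite, and a QSD precisely when the multiple is $1$; this gives case~1 of the statement, and note $m < 1$ so this is consistent with ``$\lambda < 1$''). For the remaining range $\lambda = m^\alpha$ with $\alpha \in (-\infty,1)$, $\alpha \ne 0$ as well as $\alpha = 0$ not covered by $\nu_{\text{min}}$, I would work directly with \eqref{eq:G_rep} and let $z \uparrow 1$. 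Since $H$ is the generating function of a probability measure, $H(z) \uparrow 1$ as $z \uparrow 1$, so $e^{(H(z)-1)x} - e^{-x} \uparrow 1 - e^{-x}$ pointwise in $x$, monotonically; by monotone convergence,
\begin{equation*}
\|\nu\| = \lim_{z\uparrow 1} G(z) = \int_0^\infty (1 - e^{-x})\,\frac{1}{x^\alpha}\,\Lambda(dx) \in (0,\infty].
\end{equation*}

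The crux is then to show that this integral is finite if and only if $\alpha < 1$ — equivalently, that the scaling invariance $\Lambda(A) = \Lambda(mA)$ forces $\int_0^\infty (1-e^{-x}) x^{-\alpha}\,\Lambda(dx) = \infty$ whenever $\alpha \le 0$ (for $\alpha=0$, $\Lambda\ne 0$; and for $\lambda > 1$, i.e.\ this cannot happen since $\lambda = m^\alpha \le 1$ for $\alpha \ge 0$ — wait, I must be careful: $m \in (0,1)$, so $m^\alpha < 1 \iff \alpha > 0$, $m^\alpha = 1 \iff \alpha = 0$, $m^\alpha > 1 \iff \alpha < 0$). So the claim ``$\nu$ finite $\iff \lambda < 1$'' amounts to ``$\int_0^\infty (1-e^{-x})x^{-\alpha}\Lambda(dx) < \infty \iff \alpha > 0$'' for the non-trivial scaling-invariant measures $\Lambda$. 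The key observation is that a non-zero locally finite measure $\Lambda$ on $(0,\infty)$ with $\Lambda(mA) = \Lambda(A)$ is, up to the logarithmic change of variables $x = e^t$, translation-invariant with period $\log(1/m)$; hence it has infinite total mass, and more precisely $\Lambda((m^{k+1},m^k]) = c > 0$ is constant in $k \in \Z$ for the base interval, so $\Lambda$ charges a geometric progression of scales with comparable mass. I would split $\int_0^\infty = \int_0^1 + \int_1^\infty$: on $(1,\infty)$ the factor $1 - e^{-x}$ is bounded between a positive constant and $1$, so $\int_1^\infty (1-e^{-x})x^{-\alpha}\Lambda(dx) \asymp \sum_{k \le 0} m^{-\alpha k} c$, which converges iff $\alpha > 0$ (using $m < 1$, so $m^{-\alpha k} = m^{\alpha |k|} \to 0$ iff $\alpha > 0$); on $(0,1)$ the factor $1 - e^{-x} \asymp x$, so $\int_0^1 (1-e^{-x})x^{-\alpha}\Lambda(dx) \asymp \int_0^1 x^{1-\alpha}\Lambda(dx) \asymp \sum_{k \ge 1} m^{k(1-\alpha)} c$, which converges iff $1 - \alpha > 0$, i.e.\ $\alpha < 1$. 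Combining, the integral is finite iff $0 < \alpha < 1$, i.e.\ iff $m < \lambda < 1$; together with the $\lambda = m$ case this is exactly ``$\nu$ finite $\iff \lambda < 1$''.

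Finally, for the QSD classification I would just add the normalization. Given Theorem~\ref{th:main}, any QSD has $\lambda = m^\alpha$ with $\alpha \in (0,1)$ (the only finite case besides $\lambda = m$, $\nu = \nu_{\text{min}}$, which is case~1), generating function of the form \eqref{eq:G_rep} with $\Lambda$ scaling-invariant and locally finite, and $\|\nu\| = \int_0^\infty(1-e^{-x})x^{-\alpha}\Lambda(dx)$; requiring $\|\nu\| = 1$ gives condition~(b). Conversely, any such $\Lambda$ produces via \eqref{eq:G_rep} an $m^\alpha$-invariant measure (by Theorem~\ref{th:main}) of total mass $1$, hence a QSD. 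Uniqueness of $\Lambda$ is inherited verbatim from Theorem~\ref{th:main}. The main obstacle I anticipate is purely the bookkeeping in the asymptotic two-sided bounds above — making the comparisons $\asymp$ rigorous using the exact periodicity of $\Lambda$ under $x \mapsto mx$ and justifying that local finiteness of $\Lambda$ on $(0,\infty)$ indeed forces the single-period mass $c$ to be finite and (by non-triviality) strictly positive — but no new idea beyond Theorem~\ref{th:main} is needed.
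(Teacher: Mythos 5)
Your proposal is correct and follows essentially the same route as the paper: reduce everything to Theorem~\ref{th:main} and decide finiteness of $\|\nu\| = \int_0^\infty (1-e^{-x})x^{-\alpha}\,\Lambda(dx)$ separately at the origin (convergent iff $\alpha<1$) and at infinity (convergent iff $\alpha>0$) using the scale-invariance of $\Lambda$. The only cosmetic difference is that you re-derive the relevant comparison by hand via the $m$-adic decomposition of $(0,\infty)$, whereas the paper simply invokes its Lemma~\ref{lem:Lambda}.
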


%
%
%

%
%

\begin{remark}
If $\Lambda(dx) = \frac 1 x\,dx$ in the above theorem, then $G(z) = 1-(1-H(z))^\alpha$, see Remark~\ref{rem:formulae} below. These QSD were found by Seneta and Vere-Jones in their seminal paper on QSD of Markov chains on countably infinite state spaces \cite{Seneta1966}. Rubin and Vere-Jones \cite{Rubin1968} showed later that these QSD are the only ones with regularly varying tails and furthermore, every distribution $\nu$ on $\N^*$ with a tail of the form $\nu([x,\infty)) = x^{-\alpha}L(x)$ for a slowly varying function $L(x)$ is in the domain of attraction of the above QSD (see also \cite{Seneta1971} for an analogous result for $1$-invariant measures).
\end{remark}

\begin{remark}
 The fact that the Yaglom limit $\nu_{\text{min}}$ as defined in \eqref{eq:yaglom} is the QSD of smallest eigenvalue is a general fact \cite[p515]{Ferrari1995a}. Furthermore, the fact that it is the unique QSD with eigenvalue $m$ is classic in our case \cite{Heathcote1967,Joffe1967}.
\end{remark}
%
%

\begin{remark}
QSD of BGW processes (and Formula~\eqref{eq:G_rep}) appear in a recent article by H\'enard and the author on random trees invariant under Bernoulli edge contraction \cite{Henard2014}.
\end{remark}

\paragraph{Continuous-time BGW processes.}

$\lambda$-invariant measures can be defined analogous\-ly for a subcritical \emph{continuous-time} BGW process $(Z_t)_{t\ge0}$. Let $L$  and $(P_t)_{t\ge0}$ be its associated infinitesimal generator and semigroup, respectively, restricted to $\N^*$. We say that a measure $\nu$ on $\N^*$ is a $\lambda$-invariant measure of the process $(Z_t)_{t\ge0}$ if $\nu L = -\lambda \nu$, or equivalently, if $\nu P_t = e^{-\lambda t} \nu$ for every $t\ge0$. In this case, $\nu$ is also a $e^{-\lambda r}$-invariant measure of the embedded chain $(Z_{rn})_{n\ge0}$, for every $r>0$. The measure $\Lambda$ from Theorem~\ref{th:main} then satisfies $\Lambda(A) = \Lambda(rA)$ for every Borel set $A$ and every $r>0$, hence $\Lambda(dx) = \frac 1 x\,dx$. We therefore have the following corollary to Theorem~\ref{th:main}, which also follows (in the pure death case) from results for general birth-and-death chains \cite{Cavender1978}.

\begin{corollary}
\label{cor:continuous}
 Let $(Z_t)_{t\ge0}$ be a subcritical continuous-time BGW process and let $m>0$ such that for all $t\ge0$, $\E_1[Z_t] = e^{-mt}$. 
\begin{enumerate}
\item There exist no non-trivial (i.e. $\not\equiv 0$) $\lambda$-invariant measures for $(Z_t)_{t\ge0}$ with $\lambda > m$.
 \item The only $m$-invariant measures of $(Z_t)_{t\ge0}$ are multiples of the Yaglom limit $\nu_{\text{min}}$.
\item	For every $\lambda < m$, the $\lambda$-invariant measures of $(Z_t)_{t\ge0}$ are exactly the multiples of the measure whose generating function is given by \eqref{eq:G_rep} with $\alpha = \lambda/m$ and $\Lambda(dx) = \frac 1 x\,dx$ if $\lambda < m$. More explicitly, $G$ is the generating function of a $\lambda$-invariant measure if and only if there exists $c\ge0$, such that
\[
G(z) = c\times
\begin{cases}
1- (1-H(z))^\alpha, & \alpha > 0\\
-\log (1-H(z)), & \alpha = 0\\
(1-H(z))^\alpha - 1, & \alpha < 0.
\end{cases}
\]
\end{enumerate}
In particular, the only QSD of the process $(Z_t)_{t\ge0}$ with eigenvalue $\alpha m$, $\alpha\in(0,1]$ is the probability measure with generating function $1-(1-H(z))^\alpha$.
\end{corollary}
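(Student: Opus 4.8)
The plan is to deduce Corollary~\ref{cor:continuous} from Theorem~\ref{th:main} by exploiting the extra rigidity that the continuous-time structure imposes on the measure $\Lambda$. First I would record the elementary translation: if $\nu$ is a $\lambda$-invariant measure of $(Z_t)_{t\ge0}$, i.e.\ $\nu P_t = e^{-\lambda t}\nu$ for all $t\ge 0$, then for each fixed $r>0$ the embedded discrete chain $(Z_{rn})_{n\ge0}$ has transition matrix $P_r$ restricted to $\N^*$, and $\nu$ is a $e^{-\lambda r}$-invariant measure for it. The embedded chain is itself a subcritical discrete-time BGW process with mean offspring number $\E_1[Z_r] = e^{-mr}$, so Theorem~\ref{th:main} applies with $m$ replaced by $e^{-mr}$ and $\lambda$ replaced by $e^{-\lambda r}$; writing $e^{-\lambda r} = (e^{-mr})^\alpha$ gives $\alpha = \lambda/m$, independent of $r$. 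Note also that the Yaglom limit $\nu_{\text{min}}$ and hence its generating function $H$ is the same for all the embedded chains: this follows because the Yaglom limit of $(Z_t)_{t\ge0}$ exists (it is the QSD of minimal eigenvalue $m$, cf.\ the remark after Theorem~\ref{th:qsd}) and agrees with the discrete-time Yaglom limit along any subsequence $t=rn$.

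Next I would run the three cases. For (1), if $\lambda>m$ then $\alpha=\lambda/m>1$, so $e^{-\lambda r}<(e^{-mr})$ would force the embedded chain (subcritical, mean $e^{-mr}$) to have a $\lambda'$-invariant measure with $\lambda' < e^{-mr}$; but wait — one must be careful about the direction of the inequality, since $e^{-\lambda r}<e^{-mr}$ exactly when $\lambda>m$, and part~1 of Theorem~\ref{th:main} rules out eigenvalues \emph{strictly below} the mean, so there are no non-trivial such measures. For (2), $\lambda=m$ gives $\alpha=1$, $e^{-\lambda r}=e^{-mr}$, and part~2 of Theorem~\ref{th:main} says the $e^{-mr}$-invariant measures of the embedded chain are exactly the multiples of $\nu_{\text{min}}$; since this holds for one (hence every) $r$, the same is true in continuous time. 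For (3), $\lambda<m$ gives $\alpha=\lambda/m\in(-\infty,1)$ (and $\in(0,1)$ in the QSD statement). Part~3 of Theorem~\ref{th:main}, applied to the embedded chain at mesh $r$, says $G$ has the representation \eqref{eq:G_rep} for a unique measure $\Lambda^{(r)}$ on $(0,\infty)$ satisfying $\Lambda^{(r)}(A)=\Lambda^{(r)}(e^{-mr}A)$. Because $G$, $H$ and $\alpha$ do not depend on $r$ and $\Lambda$ is \emph{uniquely determined} from $\nu$ in Theorem~\ref{th:main}, we must have $\Lambda^{(r)} = \Lambda$ for all $r>0$, so $\Lambda$ is invariant under scaling by $e^{-mr}$ for every $r>0$, i.e.\ by every factor in $(0,1)$, hence (by taking inverses and products) by every positive scalar. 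The only locally finite measures on $(0,\infty)$ invariant under all multiplicative scalings are the multiples of $dx/x$, so $\Lambda(dx)=c\,dx/x$ for some $c\ge0$.

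It then remains to evaluate \eqref{eq:G_rep} explicitly with $\Lambda(dx)=c\,dx/x$, i.e.\ $x^{-\alpha}\Lambda(dx) = c\,x^{-\alpha-1}dx$. Substituting $u = x$ and using the standard integral identity
\[
\int_0^\infty \bigl(e^{-ax}-e^{-bx}\bigr)\,\frac{dx}{x^{1+\alpha}} = \frac{\Gamma(-\alpha)}{1}\bigl(a^\alpha - b^\alpha\bigr)\quad\text{for }0<\alpha<1,
\]
with $a = 1-H(z)\ge 0$ and $b=1$ (and its analytic continuations / limiting forms $\int(e^{-ax}-e^{-bx})\,dx/x = \log(b/a)$ at $\alpha=0$ and the sign-flipped version for $\alpha<0$ coming from $\Gamma(-\alpha)$), one gets, after absorbing the constant $c\,|\Gamma(-\alpha)|$ into a new constant (still called $c$), precisely the three-case formula in the statement, with the sign arranged so that $G$ has nonnegative coefficients; the normalization $G(1)=1$ (using $H(1)=1$, so $1-H(1)=0$) then pins down $c$ in the QSD case and yields the final sentence. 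I expect the main obstacle to be the bookkeeping in case (1)/(3) around which direction of inequality Theorem~\ref{th:main} excludes after the reparametrization $m\mapsto e^{-mr}$, together with justifying that the Yaglom limit and the uniqueness of $\Lambda$ transfer across all mesh sizes $r$ simultaneously — once that is clean, the reduction of $\Lambda$ to $dx/x$ and the explicit integral are routine.
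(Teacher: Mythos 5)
Your proposal is correct and follows essentially the same route as the paper: reduce to the embedded chains $(Z_{rn})_{n\ge0}$, apply Theorem~\ref{th:main} with mean $e^{-mr}$ and $\alpha=\lambda/m$, use the uniqueness of $\Lambda$ to deduce $\Lambda(A)=\Lambda(rA)$ for all $r>0$ and hence $\Lambda(dx)=c\,dx/x$, and evaluate the integral via \eqref{eq:gamma}. Your added care about the Yaglom limit being the same for all mesh sizes $r$ is a point the paper leaves implicit, but otherwise the arguments coincide.
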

\begin{remark}
A similar phenomenon happens for \emph{continuous-state} branching processes, see \cite{Lambert2007}.
\end{remark}
\begin{remark}
\label{rem:formulae}
The explicit formulae in Corollary~\ref{cor:continuous} are obtained from the following well-known equality which we recall for convenience:
\begin{equation}
\forall a\in(0,1)\,\forall \alpha< 1: \int_0^\infty (e^{-ax} - e^{-x})\frac{dx}{x^{\alpha+1}} =
\begin{cases}
\Gamma(-\alpha)(a^\alpha - 1), & \alpha \ne 0\\
-\log a, & \alpha = 0.
\end{cases}
\label{eq:gamma}
\end{equation}
An easy proof goes by noting that for each $a\in(0,1)$, both sides of the equation define analytic functions on the half-plane $\{\Re \alpha < 1\}$ and agree on $\{\Re \alpha < 0\}$ as can easily be checked by calculating the two Euler integrals. The case $\alpha=0$ is also a special case of Frullani's integral.
\end{remark}

\paragraph{$\lambda$-invariant measures of the process which is not killed at the origin.}

Say that a measure $\nu$ on $\N = \{0,1,\ldots\}$ is a \emph{true} $\lambda$-invariant measure for $Z$ if it is a $\lambda$-invariant measure for the \emph{non-killed} process. In other words, if $P_0$ denotes the transition matrix of the BGW process $Z$ on $\N$, a measure $\nu$ on $\N$ is a true $\lambda$-invariant measure for $Z$ if and only if $\nu P_0 = \lambda \nu$, or equivalently, if its generating function $G$ satisfies $G(F(z)) = \lambda G(z)$ for every $|z|<1$.
Of course, since $0$ is an absorbing state for the process, there are no true $\lambda$-invariant measures for $\lambda < 1$, and for $\lambda = 1$ the only true (1-)invariant measures are the multiples of $\delta_0$ (see e.g. \cite[p.\,67]{Athreya1972}). However, for $\lambda > 1$ the $\lambda$-invariant measures from Theorem~\ref{th:main} all extend to true $\lambda$-invariant measures. In fact, we have the following analogue of Theorem~\ref{th:main}:
\begin{theorem}
\begin{enumerate}
	\item There exist no non-trivial (i.e. $\not\equiv 0$) true $\lambda$-invariant measures for $Z$ with $\lambda < 1$.
	\item The only true (1-)invariant measures for $Z$ are multiples of $\delta_0$.
  \item Let $\alpha< 0$. A measure $\nu$ on $\N$ is a true $m^\alpha$-invariant measure for $Z$ if and only if its generating function $G(z) = \sum \nu(n)z^n$ satisfies
		\begin{equation}
		\label{eq:G_rep2}
		G(z) = \int_0^\infty e^{(H(z)-1)x}\frac 1 {x^\alpha}\,\Lambda(dx),\quad |z|<1,
		\end{equation}
		where $\Lambda$ is a locally finite measure on $(0,\infty)$ satisfying $\Lambda(A) = \Lambda(mA)$ for every Borel set $A\subset(0,\infty)$. The measure $\Lambda$ is uniquely determined from $\nu$. Moreover, for every such measure $\Lambda$, \eqref{eq:G_rep} defines the generating function of a true $m^\alpha$-invariant measure for $Z$ with radius of convergence at least 1.
\end{enumerate}
\label{th:true_main}
\end{theorem}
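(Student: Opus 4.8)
The plan is to deduce Theorem~\ref{th:true_main} from Theorem~\ref{th:main} by peeling off the mass at the absorbing state $0$. Given a true $\lambda$-invariant measure $\nu$ on $\N$, I would write $\nu=\nu(0)\delta_0+\nu'$ with $\nu'$ the restriction of $\nu$ to $\N^*$ and $G'(z)=\sum_{n\ge1}\nu(n)z^n$. Since $0$ is absorbing, $P_0(0,\cdot)=\delta_0$ and $P_0$ agrees with $P$ on $\N^*\times\N^*$, so reading $\nu P_0=\lambda\nu$ off coordinate by coordinate yields two things: on $\N^*$ it says $\nu'P=\lambda\nu'$, i.e.\ $\nu'$ is a $\lambda$-invariant measure of the killed process; at the coordinate $0$ it says $\nu(0)+\sum_{i\ge1}\nu(i)F(0)^i=\lambda\nu(0)$, i.e.
\begin{equation}
(\lambda-1)\,\nu(0)=G'(F(0)).
\label{eq:true_zero}
\end{equation}
A subcritical offspring law necessarily has $F(0)=\P_1(Z_1=0)>0$, so the right-hand side of \eqref{eq:true_zero} is $\ge0$ and vanishes only when $\nu'\equiv0$. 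This settles parts (1) and (2): for $\lambda<1$, \eqref{eq:true_zero} forces $G'(F(0))=0$ and $\nu(0)=0$, hence $\nu\equiv0$; for $\lambda=1$ it forces $\nu'\equiv0$, leaving $\nu=\nu(0)\delta_0$, and $\delta_0$ is visibly true $1$-invariant. Conversely, for $\lambda\ne1$ any $\lambda$-invariant measure $\nu'$ of the killed process extends to a true $\lambda$-invariant measure on $\N$ by appending the mass $\frac{1}{\lambda-1}G'(F(0))\,\delta_0$, a nonnegative number precisely when $\lambda>1$.

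For part (3), fix $\alpha<0$, so $\lambda=m^\alpha>1$. Starting from a true $m^\alpha$-invariant $\nu$, the reduction above gives an $m^\alpha$-invariant $\nu'$ of the killed process, and Theorem~\ref{th:main}(3) (legitimately applied, since $\alpha\in(-\infty,1)$) supplies a unique $m$-scale-invariant $\Lambda$ with $G'$ of the form \eqref{eq:G_rep} and radius of convergence $\ge1$ (so $G'(F(0))$ is finite). To turn $G(z)=\nu(0)+G'(z)$ into \eqref{eq:G_rep2} I would show the constant produced by \eqref{eq:true_zero} equals $\int_0^\infty e^{-x}x^{-\alpha}\,\Lambda(\dd x)$. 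Two computations do this. First, specialising \eqref{eq:gf_measure} to $G=H$, $\lambda=m$ and letting $z\uparrow1$ (using $F(1)=H(1)=1$) gives $H(F(0))=1-m$, whence \eqref{eq:G_rep} evaluated at $F(0)$ becomes $G'(F(0))=\int_0^\infty(e^{-mx}-e^{-x})x^{-\alpha}\,\Lambda(\dd x)$. Second, the substitution $y=mx$ together with $\Lambda(A)=\Lambda(mA)$ gives $\int_0^\infty f(mx)\,\Lambda(\dd x)=\int_0^\infty f(y)\,\Lambda(\dd y)$ for nonnegative $f$; applied with $f(y)=e^{-y}y^{-\alpha}$ this yields $\int_0^\infty e^{-mx}x^{-\alpha}\,\Lambda(\dd x)=m^\alpha\int_0^\infty e^{-x}x^{-\alpha}\,\Lambda(\dd x)$. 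Combining, $(\lambda-1)\nu(0)=G'(F(0))=(m^\alpha-1)\int_0^\infty e^{-x}x^{-\alpha}\,\Lambda(\dd x)$, as needed; uniqueness of $\Lambda$ is inherited from Theorem~\ref{th:main}.

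For the converse half of part (3) I would start from an $m$-scale-invariant locally finite $\Lambda$ and check directly that \eqref{eq:G_rep2} is the generating function of a true $m^\alpha$-invariant measure of radius of convergence $\ge1$. Nonnegativity of the coefficients follows from expanding $e^{(H(z)-1)x}=e^{-x}\sum_{j\ge0}\frac{x^j}{j!}H(z)^j$ and using that $H$ has nonnegative coefficients; finiteness of $G$ on $[0,1)$ follows because $H(z)<1$ there, so the exponential decay of $x\mapsto e^{(H(z)-1)x}$ beats the polynomial factor $x^{-\alpha}$ against the $\Lambda$-mass of the intervals $[m^{-k},m^{-k-1})$ (the same double-exponential-beats-polynomial estimate used in the proof of Theorem~\ref{th:main}), and Tonelli then legitimises the expansion; its constant term is $\int_0^\infty e^{-x}x^{-\alpha}\,\Lambda(\dd x)$, consistent with \eqref{eq:true_zero}. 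For the functional equation, the identity $H(F(0))=1-m$ rewrites $H(F(z))-H(F(0))=mH(z)$ as $H(F(z))-1=m(H(z)-1)$, so $G(F(z))=\int_0^\infty e^{m(H(z)-1)x}x^{-\alpha}\,\Lambda(\dd x)$, and the substitution $y=mx$ with $\Lambda(A)=\Lambda(mA)$ turns this into $m^\alpha G(z)$. I expect no real obstacle here: given Theorem~\ref{th:main}, the content is just the reduction \eqref{eq:true_zero} together with the observation that scale-invariance of $\Lambda$ is exactly what reconciles the $\delta_0$-term with both \eqref{eq:G_rep2} and the functional equation, the remaining convergence points being routine.
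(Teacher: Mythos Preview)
Your proof is correct and follows essentially the same route as the paper: peel off the mass at the absorbing state $0$, apply Theorem~\ref{th:main} to the restriction $\nu'$ on $\N^*$, and use the constraint at coordinate $0$ together with $H(F(0))=1-m$ and the scale-invariance of $\Lambda$ to identify the constant term; the paper phrases the coordinate-$0$ constraint as $G(F(0))=\lambda G(0)$ and absorbs $G(0)$ into a constant $C$ to be shown equal to $0$, but this is exactly your \eqref{eq:true_zero} in different packaging. Your self-contained argument for parts (1) and (2) via $F(0)>0$ is a welcome addition, as the paper merely refers these to the discussion preceding the theorem.
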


For a subcritical \emph{continuous-time} BGW process, we can define true $\lambda$-invariant measures analoguously as above. The analogue of Corollary~\ref{cor:continuous} is then the following:
\begin{corollary}
Let $(Z_t)_{t\ge0}$ be a subcritical continuous-time BGW process and let $m>0$ such that for all $t\ge0$, $\E_1[Z_t] = e^{-mt}$. 
\begin{enumerate}
	\item There exist no non-trivial (i.e. $\not\equiv 0$) $\lambda$-invariant measures for $(Z_t)_{t\ge0}$ with $\lambda > 0$.
	\item The only true (0-)invariant measures for $(Z_t)_{t\ge0}$ are the multiples of $\delta_0$.
	\item For $\lambda < 0$, the true $\lambda$-invariant measures for $(Z_t)_{t\ge0}$ are exactly the multiples of the one given by \eqref{eq:G_rep2} with $\alpha = \lambda/m$ and $\Lambda(dx) = \frac 1 x\,dx$, i.e. the measures with generating functions $G(z) = c(1-H(z))^\alpha$, $c\ge 0$.
	\end{enumerate}
\end{corollary}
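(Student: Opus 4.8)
\emph{Proof plan.} I would reduce the whole statement to the discrete-time Theorem~\ref{th:true_main} by passing to embedded chains, exactly as in the derivation of Corollary~\ref{cor:continuous}. For a fixed $r>0$ the process $(Z_{rn})_{n\ge0}$ is a subcritical discrete-time BGW process (its offspring law is that of $Z_r$ under $\P_1$, of mean $\E_1[Z_r]=e^{-mr}<1$), and, writing $(P^0_t)_{t\ge0}$ for the non-killed semigroup of $(Z_t)_{t\ge0}$ on $\N$, a measure $\nu$ on $\N$ is a true $\lambda$-invariant measure of $(Z_t)_{t\ge0}$ if and only if $\nu P^0_r=e^{-\lambda r}\nu$ for every $r>0$, i.e.\ if and only if $\nu$ is a true $e^{-\lambda r}$-invariant measure of $(Z_{rn})_{n\ge0}$ for every $r>0$. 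Taking $r=1$ and invoking parts~(1) and~(2) of Theorem~\ref{th:true_main} ($e^{-\lambda}<1$ when $\lambda>0$; $e^{-0}=1$ when $\lambda=0$) would settle parts~1 and~2 of the corollary, the converse in part~2 being the obvious fact that $\delta_0$ is stationary for the non-killed process.

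For part~3 I would fix $\lambda<0$, set $\alpha=\lambda/m<0$, and first record that the Yaglom limit of $(Z_{n/q})_{n\ge0}$ does not depend on the integer $q\ge1$: each of these is a subcritical discrete-time BGW process, hence has a Yaglom limit by~\eqref{eq:yaglom}, and since $(Z_n)_{n\ge0}$ is a subsequence of $(Z_{n/q})_{n\ge0}$ the two limits coincide; call the common one $\nu_{\text{min}}$ --- the Yaglom limit of the process --- with generating function $H$. Given a true $\lambda$-invariant $\nu$ with generating function $G$, for each $q$ the measure $\nu$ is a true $e^{-\lambda/q}=(e^{-m/q})^{\alpha}$-invariant measure of $(Z_{n/q})_{n\ge0}$, so Theorem~\ref{th:true_main}(3) provides a locally finite $\Lambda_q$ on $(0,\infty)$ with $\Lambda_q(A)=\Lambda_q(e^{-m/q}A)$ such that $G$ satisfies~\eqref{eq:G_rep2} with $\Lambda_q$, the common $H$ and the common $\alpha$. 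Since~\eqref{eq:G_rep2} recovers $x^{-\alpha}\Lambda_q(dx)$ from $G$ by Laplace inversion (along $s=1-H(z)$) and $H,\alpha$ are independent of $q$, the uniqueness part of Theorem~\ref{th:true_main}(3) forces the $\Lambda_q$ to be one and the same measure $\Lambda$. Then $\Lambda(A)=\Lambda(e^{-m/q}A)$ for all $q\ge1$, so $\Lambda$ is invariant under every dilation in the (dense) subgroup of $((0,\infty),\times)$ generated by $\{e^{-m/q}:q\ge1\}$; testing $\Lambda$ against compactly supported continuous functions and using dominated convergence (legitimate because $\Lambda$ is locally finite) I would upgrade this to invariance under all dilations, which forces $\Lambda(dx)=c\,\frac1x\,dx$ for some $c\ge0$ (its image under $x\mapsto\log x$ is a translation-invariant locally finite measure on $\R$, hence proportional to Lebesgue measure). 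Substituting this into~\eqref{eq:G_rep2} and evaluating the Euler integral $\int_0^\infty e^{-sx}x^{-\alpha-1}\,dx=\Gamma(-\alpha)s^{\alpha}$ (valid for $\alpha<0$, $\Re s>0$; cf.~\eqref{eq:gamma}) would give $G(z)=c\,\Gamma(-\alpha)(1-H(z))^{\alpha}$, the asserted form up to renaming the constant.

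For the converse in part~3, starting from $G(z)=c(1-H(z))^{\alpha}$ with $c\ge0$, I would note that $\frac1x\,dx$ is invariant under dilation by $e^{-m/q}$, so the ``moreover'' part of Theorem~\ref{th:true_main}(3) applied to $(Z_{n/q})_{n\ge0}$ (whose Yaglom generating function is the same $H$), together with the Euler integral above, shows that $G$ is the generating function of a true $e^{-\lambda/q}$-invariant measure of $(Z_{n/q})_{n\ge0}$, i.e.\ $G(F_{1/q}(z))=e^{-\lambda/q}G(z)$ for $|z|<1$, where $F_t$ is the generating function of $Z_t$ under $\P_1$. Using $F_{p/q}=F_{1/q}^{\circ p}$ this extends to $G(F_t(z))=e^{-\lambda t}G(z)$ for all positive rational $t$, and since $G$ has radius of convergence at least $1$ (Theorem~\ref{th:true_main}) while $|F_t(z)|\le F_t(|z|)<1$ for $|z|<1$ and $t\mapsto F_t(z)$ is continuous, passing to the limit over rationals would give $G(F_t(z))=e^{-\lambda t}G(z)$ for all $t\ge0$, i.e.\ $\nu P^0_t=e^{-\lambda t}\nu$. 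Hence $\nu$ is a true $\lambda$-invariant measure of $(Z_t)_{t\ge0}$.

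The main obstacle I anticipate is not a single hard estimate but getting the bookkeeping right in two places: the $q$-independence of the Yaglom limit (the subsequence argument handles it, but it must be in place before the uniqueness statement can be used to identify the $\Lambda_q$), and the passage from dilation invariance along a dense set of scales to all scales --- essentially a uniqueness-of-Haar-measure argument, where local finiteness supplies exactly the regularity needed to exclude pathologies. Both are short once set up.
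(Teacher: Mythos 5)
Your proposal is correct and follows essentially the same route as the paper, which derives this corollary (like Corollary~\ref{cor:continuous}) by viewing $\nu$ as a true $e^{-\lambda r}$-invariant measure of the embedded chains, invoking Theorem~\ref{th:true_main}, and using the resulting dilation invariance of $\Lambda$ to force $\Lambda(dx)=c\,\frac{1}{x}\,dx$. You merely restrict to $r=1/q$ and add a density argument, and along the way you make explicit two points the paper leaves implicit --- that all embedded chains share the same Yaglom limit and that the uniqueness of $\Lambda$ identifies the $\Lambda_q$ across scales --- both handled correctly.
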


\paragraph{Overview of the article.}
The remainder of the article is organized as follows: Theorems~\ref{th:main}, \ref{th:qsd} and \ref{th:true_main} are proven in Section~\ref{sec:proof}. Section~\ref{sec:discussion} is an extended discussion consisting of the following three parts. In Section~\ref{sec:martin}, we interpret Theorem~\ref{th:main} in the light of Martin boundary theory. Section~\ref{sec:subordinators} gives a probabilistic interpretation of the QSD from Theorem~\ref{th:qsd} in terms of semi-stable subordinators. In Section~\ref{sec:history}, we review the existing literature on $\lambda$-invariant measures of BGW processes. 

\paragraph{Notation.} Throughout the article, a statement involving an undefined variable $z$ is meant to hold (at least) for every $z\in(0,1)$.

\paragraph{Acknowledgments.}
I thank Olivier H\'enard for an extremely fruitful collaboration \cite{Henard2014} from which this article arose.  I also thank Alano Ancona and Vadim Kaimanovich for useful discussions about Martin boundary theory. An anonymous referee has made several valuable suggestions improving the presentation of the article.

\section{Proofs}
\label{sec:proof}

\def\Poi{\operatorname{Poi}}

We start with three simple lemmas.
\begin{lemma}
 \label{lem:ineq}
 Let $z,w\in(0,1)$, $z\ne w$. Then for $p>0$ small enough and for all $x\ge 0$,
 \[
  (1-p(1-z))^{x/p} - (1-p)^{x/p} \quad \begin{matrix} \ge \\ \le \end{matrix} \quad e^{(w-1)x}-e^{-x} \quad \begin{matrix} \tif w<z\\ \tif w>z\end{matrix}
 \]
\end{lemma}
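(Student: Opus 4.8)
The plan is to rewrite each of the two powers on the left-hand side in ``exponential form'', namely $(1-p(1-z))^{x/p}=\bigl(e^{\frac1p\ln(1-p(1-z))}\bigr)^x$ and similarly for $(1-p)^{x/p}$, and then to compare the resulting exponential rates with the rates $1-w$ and $1$ on the right-hand side. The only tools needed are the elementary bounds $1-u\le e^{-u}$ (for all real $u$) and $e^{-u}\le 1-u+\tfrac12 u^2$ (for $u\ge0$), together with the monotonicity $0\le a\le b,\ t\ge0\ \Rightarrow\ a^t\le b^t$. Throughout, $p$ is taken small enough that $1-p(1-z)>0$, so that all powers $(\,\cdot\,)^{x/p}$ are well defined for $x\ge 0$; this and the further smallness conditions below depend only on $z$ and $w$.

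\textbf{The case $w<z$.} First I would check that, for $p$ small enough (it suffices that $p\le 2(z-w)/(1-w)^2$), one has $1-p(1-z)\ge e^{-(1-w)p}$: by $e^{-u}\le 1-u+\tfrac12u^2$ the right-hand side is at most $1-(1-w)p+\tfrac12(1-w)^2p^2$, and the claimed inequality then reduces to $(z-w)p\ge\tfrac12(1-w)^2p^2$. Raising to the power $x/p\ge0$ gives $(1-p(1-z))^{x/p}\ge e^{(w-1)x}$. On the other hand $1-p\le e^{-p}$ gives $(1-p)^{x/p}\le e^{-x}$ for every $x\ge0$. Subtracting the second inequality from the first yields $(1-p(1-z))^{x/p}-(1-p)^{x/p}\ge e^{(w-1)x}-e^{-x}$, as desired.

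\textbf{The case $w>z$.} Here I would introduce two intermediate exponential rates. From $1-p(1-z)\le e^{-p(1-z)}$ we get $(1-p(1-z))^{x/p}\le e^{-(1-z)x}$; and from $e^{-p-p^2}\le 1-p$ (valid for $p$ small, again via $e^{-u}\le 1-u+\tfrac12u^2$) we get $(1-p)^{1/p}\ge e^{-(1+p)}$, hence $(1-p)^{x/p}\ge e^{-(1+p)x}$. Combining,
\[
(1-p(1-z))^{x/p}-(1-p)^{x/p}\ \le\ e^{-(1-z)x}-e^{-(1+p)x}\qquad(x\ge0),
\]
so it remains to prove $e^{-(1-z)x}-e^{-(1+p)x}\le e^{(w-1)x}-e^{-x}$. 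Rearranging and factoring (using $e^{(w-1)x}=e^{-(1-w)x}$), this is equivalent to
\[
e^{-(1-w)x}\bigl(1-e^{-(w-z)x}\bigr)\ \ge\ e^{-x}\bigl(1-e^{-px}\bigr),
\]
which holds for every $x\ge0$ as soon as $p\le w-z$: indeed then $e^{-(1-w)x}\ge e^{-x}\ge0$ (since $w\ge0$) and $1-e^{-(w-z)x}\ge 1-e^{-px}\ge0$, so the inequality between the products follows.

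\textbf{Expected main obstacle.} The substantive point is the comparison in the case $w>z$. One cannot argue term by term: although $(1-p(1-z))^{x/p}\le e^{(w-1)x}$ does hold for $p$ small, the companion inequality $(1-p)^{x/p}\ge e^{-x}$ is \emph{false}, since $(1-p)^{1/p}<e^{-1}$. The fix is to exploit the slack in the first term — it actually lies below $e^{-(1-z)x}$, which is appreciably smaller than $e^{(w-1)x}$ — in order to absorb the deficit $e^{-x}-(1-p)^{x/p}$ in the second; passing through the intermediate rates $1-z$ and $1+p$ and then factoring is precisely the device that makes this quantitative and reduces everything to an inequality between manifestly nonnegative products, with the hypothesis $p\le w-z$ (i.e.\ ``$p$ small enough'') entering exactly there. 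The remaining work is the routine verification of the finitely many explicit smallness thresholds on $p$ produced by the estimates $1-u\le e^{-u}$ and $e^{-u}\le 1-u+\tfrac12u^2$.
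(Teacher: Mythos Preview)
Your proof is correct and follows essentially the same strategy as the paper: bound each base $1-p(1-z)$ and $1-p$ above or below by suitable exponentials and then raise to the power $x/p$. The only difference is in the second case, where the paper chooses the lower bound $1-p\ge e^{(z-w-1)p}$ (rate $1+w-z$ rather than your $1+p$), which yields the one-line factorization $e^{(z-1)x}-e^{(z-w-1)x}=e^{(z-w)x}\bigl(e^{(w-1)x}-e^{-x}\bigr)\le e^{(w-1)x}-e^{-x}$ and so avoids your rearrangement step; but this is a cosmetic variation, not a different idea.
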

\begin{proof}
Let $z,w\in(0,1)$, $z\ne w$.
If $w<z$, then $1-p(1-z) = 1+p(z-1) \ge e^{(w-1)p}$ for all small enough $p>0$. Furthermore, $1-p \le e^{-p}$ for all $p>0$. This implies the first inequality.

If $w>z$, then $1-p \ge e^{(z-w-1)p}$ for all small enough $p>0$. Furthermore, $1-p(1-z) \le e^{(z-1)p}$ for all $p>0$. Hence, for $p>0$ small enough and $x\ge 0$.
\[
 (1-p(1-z))^{x/p} - (1-p)^{x/p} \le e^{(z-1)x} - e^{(z-w-1)x} = e^{(z-w)x}(e^{(w-1)x}-e^{-x}) \le e^{(w-1)x}-e^{-x}.
\]
This shows the second inequality and thus finishes the proof of the lemma.
\end{proof}

\begin{lemma}
 \label{lem:tail}
 Let $\nu$ be an $m^\alpha$-invariant measure of $Z$, with $\alpha\in\R$. Set $M = m^{-1}$. Then for every $\beta < \alpha$, there exists $C<\infty$, such that 
 \[
  \nu([M^n,M^{n+1})) \le C m^{\beta n},\quad\forall n\in\N.
 \]
As a consequence, we have for every $\beta < \alpha$, for some $C<\infty$, for every $x\ge 1$,
\[
 \begin{cases}
  \nu([x,\infty)) \le Cx^{-\beta}, & \beta > 0\\
  \nu([1,x]) \le Cx^{-\beta}, & \beta \le 0
 \end{cases}
\]
In particular, $\sum_{n\in\N^*} \nu(n) |z|^n < \infty$ for every $|z| < 1$. Moreover, $\nu$ is finite if $\alpha > 0$.
\end{lemma}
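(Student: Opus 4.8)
The plan is to prove the block bound $\nu([M^n,M^{n+1}))\le Cm^{\beta n}$ first and to read off everything else from it by summing geometric series. Throughout write $p_n=\P_1(Z_n>0)$; I will use that $p_n\downarrow0$, that $p_{n+1}/p_n\to m$ by \eqref{eq:pn}, that $p_n\le\E_1[Z_n]=m^n$, and, by \eqref{eq:yaglom}, that $\P_1(Z_n=y\mid Z_n>0)\to\nu_{\text{min}}(y)$ for every $y\in\N^*$.

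The core of the argument is a lower bound on $P^n(x,y_0)=\P_x(Z_n=y_0)$ that holds uniformly over a whole multiplicative window of starting points $x$, for a single well-chosen state $y_0$. I fix $y_0\in\N^*$ with $\nu_{\text{min}}(y_0)>0$ (possible since $\nu_{\text{min}}$ is a probability measure) and a constant $R$ large enough that $Rm>M$ (for instance $R=M^2+1$, giving $Rm=M+m$). Keeping exactly one of the $x$ founding particles, with $y_0$ descendants in generation $n$, and killing the other $x-1$ lineages by generation $n$ yields
\[
\P_x(Z_n=y_0)\ \ge\ x\,\P_1(Z_n=y_0)\,(1-p_n)^{x-1}.
\]
For $x\in[1/p_n,R/p_n]$ one has $x\,\P_1(Z_n=y_0)\ge p_n^{-1}\P_1(Z_n=y_0)\to\nu_{\text{min}}(y_0)$ and $(1-p_n)^{x-1}\ge(1-p_n)^{R/p_n}\to e^{-R}$, so there is $c=c(R)>0$ with $\P_x(Z_n=y_0)\ge c$ for every $x\in[1/p_n,R/p_n]$ and every large $n$. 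Evaluating $\nu P^n=m^{\alpha n}\nu$ at $y_0$ then gives
\[
m^{\alpha n}\nu(y_0)\ =\ \sum_{x\ge1}\nu(x)\,\P_x(Z_n=y_0)\ \ge\ c\,\nu\big([1/p_n,R/p_n]\big),
\]
whence $\nu([1/p_n,R/p_n])\le c^{-1}\nu(y_0)\,m^{\alpha n}$ for all large $n$. The right-hand side is a fixed finite constant times $m^{\alpha n}$ because $\nu(y_0)<\infty$ for the locally finite measure $\nu$, so this is not circular with the finiteness one is trying to prove.

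To convert this into the stated blocks, I will use $p_{n+1}/p_n\to m$ and $p_n\le m^n$ to locate, for each large $k$, an index $n=n(k)$ with $p_{n(k)}\in[m^k,Rm\cdot m^k]$: then $[M^k,M^{k+1})\subseteq[1/p_{n(k)},R/p_{n(k)}]$, so $\nu([M^k,M^{k+1}))\le c^{-1}\nu(y_0)\,m^{\alpha n(k)}$, and the same two facts pin $n(k)$ down to $\theta k-O(1)\le n(k)\le k$ for a $\theta=\theta(R)\in(0,1)$ that tends to $1$ as the slack in ``$p_{n+1}/p_n\to m$'' is shrunk. If $\alpha\le0$ then $m^{\alpha n(k)}\le m^{\alpha k}$, which gives the claim for every $\beta<\alpha$ (indeed for $\beta=\alpha$); if $\alpha>0$ then $m^{\alpha n(k)}=O(1)\,m^{\alpha\theta k}$, and choosing $\theta$ with $\alpha\theta>\beta$ handles the given $\beta<\alpha$. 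Absorbing finitely many small $k$ into the constant yields $\nu([M^k,M^{k+1}))\le Cm^{\beta k}$ for all $k$. Summing over $j\ge n$ (a geometric series of ratio $m^\beta<1$) gives $\nu([M^n,\infty))\le C'm^{\beta n}$ for $\beta\in(0,\alpha)$; summing over $j\le n$ gives $\nu([1,M^n))\le C'm^{\beta n}$ for $\beta\le0$; taking $n=\lfloor\log_M x\rfloor$ then gives the displayed estimates for all $x\ge1$. Finally $\sum_n\nu([M^n,M^{n+1}))\,|z|^{M^n}<\infty$ for $|z|<1$ since $|z|^{M^n}$ decays faster than any geometric sequence, so $G$ is finite on $\{|z|<1\}$; and if $\alpha>0$ then, for $\beta\in(0,\alpha)$, $\|\nu\|=\sum_{n\ge0}\nu([M^n,M^{n+1}))\le\sum_{n\ge0}Cm^{\beta n}<\infty$.

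The step I expect to be the main obstacle is the core window estimate. The naive route of keeping $Z_n$ small via $\E_x[Z_n]=xm^n$ controls only starting points up to $\sim Km^{-n}$, which never reaches the relevant scale $1/p_n$ when $p_n=o(m^n)$ (the non-$X\log X$ regime), so it cannot see the bulk of $\nu$'s mass. Using instead the single configuration ``one surviving lineage with $y_0$ members, all others extinct'' circumvents this, because the survival probability $p_n$ then enters only through the harmless factor $(1-p_n)^{x-1}$; the Yaglom asymptotics $\P_1(Z_n=y_0)\sim\nu_{\text{min}}(y_0)\,p_n$ is precisely what makes the constant $c$ uniform over the window and over $n$.
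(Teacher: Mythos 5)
Your proof is correct, but it takes a genuinely different route from the paper's. The paper works with a \emph{single} step of the chain: fixing $a<M<A$, the weak law of large numbers gives $\P_x(Z_1\in[x/A,x/a))\ge 1-\ep$ for $x$ large, whence $\nu([x,y))\le m^\beta\,\nu([x/A,y/a))$ for every $\beta<\alpha$ and $x$ large; iterating this self-improving inequality with $a,A$ close to $M$ then yields the block bound. You instead use $n$ steps at once: you evaluate $\nu P^n=m^{\alpha n}\nu$ at one fixed state $y_0$ in the support of $\nu_{\text{min}}$ and lower-bound $\P_x(Z_n=y_0)$ uniformly for integer $x\in[1/p_n,R/p_n]$ via the disjoint events ``lineage $i$ has $y_0$ descendants at time $n$, all other lineages are extinct''; this is valid, and the uniform constant $c$ indeed comes exactly from $\P_1(Z_n=y_0\mid Z_n>0)\to\nu_{\text{min}}(y_0)$ and $(1-p_n)^{R/p_n}\to e^{-R}$. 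Your bookkeeping for locating $n(k)$ with $p_{n(k)}\in[m^k,Rm\cdot m^k]$ (possible because $p_{n+1}/p_n\to m$ and $Rm>1/m$, so the sequence cannot jump over such a window) and the two-sided bound $\theta k-O(1)\le n(k)\le k$ also check out. The trade-off: the paper's argument needs neither the Yaglom limit nor \eqref{eq:pn} for this lemma, only the LLN and the eigenvector equation, but its iteration step is left as ``elementary arguments''; yours imports \eqref{eq:yaglom} and \eqref{eq:pn} (both quoted as known facts in the paper and used heavily later anyway) but avoids the iteration entirely and lands directly on the scale $1/p_n$ at which the rescaled measures $\mu_n$ of the main proof live, which makes it arguably more in the spirit of the rest of the paper. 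One small repair in the last step: for $\beta=0$ the sum $\sum_{j\le n}Cm^{\beta j}$ is linear in $n$, not a convergent geometric series, so the bound $\nu([1,x])\le C$ in the case $\beta=0<\alpha$ should be obtained by invoking the block bound with some $\beta'\in(0,\alpha)$ rather than with $\beta=0$ itself; this is the same triviality the paper elides with ``the remaining statements follow.''
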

\begin{proof}
 Fix $a < M < A$ and $\ep>0$. Let $\nu$ be as in the statement and recall the definition of the transition matrix $P$.
 From the branching property and the law of large numbers we get for large $n$,
 \[
  \delta_n P ([n/A,n/a)) = \P_n(Z_1 \in[n/A,n/a)) \ge 1-\ep.
 \]
 This implies that for every $x$ large enough and $y\ge x$,
 \begin{equation*}
  \nu P([x/A,y/a)) \ge (1-\ep)\nu([x,y)).
 \end{equation*}
Now let $\ep \to 0$. The previous inequality together with \eqref{eq:nu_P} (with $\lambda = m^\alpha$) gives for every $\beta < \alpha$, for every $x$ large enough and $y \ge x$,
 \begin{equation}
 \label{eq:nu1}
  \nu([x,y)) \le m^\beta \nu ([x/A,y/a)).
 \end{equation}
 
 Now set $b_n = \nu([M^n,M^{n+1}))$. Iterating \eqref{eq:nu1} and choosing $A$ and $a$ close to $M$ one readily shows that for every $\beta < \alpha$ and $\delta > 0$, there exists $K\in\N$ such that for $n\ge K$,
 \[
  b_{K+n} \le m^{\beta n}(b_K + \cdots + b_{K + \lfloor \delta n\rfloor + 1}).
 \]
Elementary arguments yield the first statement of the lemma. The remaining statements follow.
\end{proof}

\begin{lemma}
 \label{lem:Lambda}
 Let $f:(0,\infty)\to(0,\infty)$ be measurable and satisfying for some constant $C\ge1$:
 \[
  \forall n\in\Z: \text{ either $f \equiv 0$ on $[m^n,m^{n-1})$ or }\forall x,y\in [m^n,m^{n-1}): f(x)/f(y) \in [C^{-1},C].
 \]
Furthermore, let $\Lambda$ be a measure on $(0,\infty)$ satisfying $\Lambda(A) = \Lambda(mA)$ for all Borel $A\subset(0,\infty)$ and $\Lambda([m,1)) \ne 0$. Then
 \[
 \frac 1 {\Lambda([m,1))}\int_0^\infty f(x)\,\Lambda(dx) \asymp_C \frac 1 {\log m^{-1}}\int_0^\infty f(x)\,\frac{dx}{x},
 \]
where we set $a\asymp_C b \iff C^{-1} b\le a\le Cb$.
\end{lemma}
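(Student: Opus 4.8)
The plan is to reduce everything to a block-by-block comparison along the partition of $(0,\infty)$ into the half-open intervals $I_n := [m^n, m^{n-1})$, $n\in\Z$ (recall $0<m<1$, so indeed $m^n<m^{n-1}$). On each block $I_n$ the hypothesis on $f$ says that $f$ is either identically $0$ or, up to the multiplicative factor $C$, constant. The idea is then to compare, separately on each $I_n$, the $\Lambda$-mass and the $\tfrac{dx}{x}$-mass carried by $f$, and to sum the resulting two-sided estimates over $n\in\Z$.

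First I would record that the self-similarity $\Lambda(A)=\Lambda(mA)$ iterates to $\Lambda(A)=\Lambda(m^kA)$ for every $k\in\Z$ (iterate forward for $k\ge 0$; apply the relation to $m^{-1}A$ to get the case $k=-1$, then iterate). Since $I_n=m^{n-1}[m,1)$, this gives $\Lambda(I_n)=\Lambda([m,1))$ for every $n$, a quantity that is finite (local finiteness, as $\overline{[m,1)}$ is compact in $(0,\infty)$) and nonzero by assumption. In parallel, $\int_{I_n}\frac{dx}{x}=\log m^{n-1}-\log m^n=\log m^{-1}$, also independent of $n$.

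The core step is the per-block comparison. Fix $n$ such that $f$ is positive somewhere on $I_n$ and set $\ell:=\inf_{I_n}f$, $u:=\sup_{I_n}f$; from $f(x)/f(y)\le C$ one gets $0<\ell\le u\le C\ell$. Now each of the two normalized averages
\[
\frac{1}{\Lambda(I_n)}\int_{I_n}f\,\Lambda(dx),\qquad \frac{1}{\log m^{-1}}\int_{I_n}f\,\frac{dx}{x}
\]
lies in the interval $[\ell,u]$, because a normalized average of a function with values in $[\ell,u]$ against a nonzero finite measure stays in $[\ell,u]$. Hence their ratio lies in $[\ell/u,\,u/\ell]\subset[C^{-1},C]$, i.e.
\[
\frac{1}{\Lambda(I_n)}\int_{I_n}f\,\Lambda(dx)\;\asymp_C\;\frac{1}{\log m^{-1}}\int_{I_n}f\,\frac{dx}{x}.
\]
Multiplying by $\Lambda(I_n)=\Lambda([m,1))$ turns this into $\int_{I_n}f\,\Lambda(dx)\asymp_C \frac{\Lambda([m,1))}{\log m^{-1}}\int_{I_n}f\,\frac{dx}{x}$; on the blocks where $f\equiv 0$ both sides vanish, so the relation holds for every $n\in\Z$. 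Summing the two-sided bounds over $n\in\Z$ (legitimate for nonnegative summands, the statement remaining valid in $[0,\infty]$) and dividing by $\Lambda([m,1))$ gives exactly the claimed equivalence.

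The one point that needs genuine care is getting the sharp constant $C$ rather than $C^2$: a naive argument that first replaces $f$ by a representative value $f(x_n)$ on each $I_n$ and then compares the two integrals loses a factor $C$ at each of two steps. The averaging observation above avoids this by comparing the two normalized means directly through their common enclosing interval $[\ell,u]$. The remaining verifications — iteration of the scaling relation, finiteness and nonvanishing of $\Lambda([m,1))$, and the behaviour when one (hence, by the per-block bound, both) of the two integrals is infinite — are routine.
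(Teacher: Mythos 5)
Your proof is correct, and it attains the sharp constant $C$ rather than $C^2$, which is the only delicate point of the lemma. The decomposition into the blocks $I_n=[m^n,m^{n-1})$ and the two facts driving the argument --- $\Lambda(I_n)=\Lambda([m,1))$ and $\int_{I_n}\frac{dx}{x}=\log m^{-1}$ for every $n\in\Z$ --- are the same ones the paper relies on, but the mechanism differs. The paper constructs an explicit map $\varphi:(0,\infty)\to(0,\infty)$ preserving each block $I_n$ and pushing the normalized measure $(\log m^{-1})^{-1}\frac{dx}{x}$ forward to $\Lambda([m,1))^{-1}\Lambda$ (built block by block from distribution functions and their inverses), then compares $f$ with $f\circ\varphi$ pointwise via the hypothesis and concludes by the change-of-variables formula. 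You avoid any transport map: on a block where $f\not\equiv 0$, both normalized block averages lie in $[\inf_{I_n}f,\sup_{I_n}f]$, an interval of multiplicative width at most $C$, and the two-sided per-block bounds sum over $n$. Your route is more elementary and makes it transparent why the constant is $C$ and not $C^2$; the paper's transport map gives a one-line finish once built, but its construction (handling atoms of $\Lambda$, generalized inverses) needs care that your averaging argument sidesteps. Your treatment of the degenerate blocks ($f\equiv 0$, both sides vanish) and of the possibly infinite sums is also fine.
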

\begin{proof}
First note that the restriction of the measure $\Lambda([m,1))^{-1}\Lambda$ to $[m,1)$ can be written as the image of the measure $(\log m^{-1})^{-1}\frac{dx}{x}$ on $[m,1)$ under a suitable map $\widetilde\varphi$: first map the latter via its distribution function to Lebesgue measure on $[0,1]$, then map this back to $[m,1)$ via the inverse of the distribution function of the measure $\Lambda([m,1))^{-1}\Lambda$. Then extend the map $\widetilde\varphi$ to a map $\varphi$ on $(0,\infty)$ by
\[
\varphi(x) = m^n \widetilde\varphi(m^{-n}x),\quad x\in[m^{n+1},m^n),\ n\in\Z. 
\]
By the self-similarity of the measures $\Lambda$ and $\frac{dx}{x}$, the measure $\Lambda([m,1))^{-1}\Lambda$ is indeed the  image of the measure $(\log m^{-1})^{-1}\frac{dx}{x}$ on $(0,\infty)$ by the map $\varphi$. Furthermore, by construction the map $\varphi$ maps every interval $[m^n,m^{n-1})$, $n\in\Z$, to itself. In particular, for all $x>0$, either $f(\varphi(x)) = f(x) = 0$ or $f(\varphi(x))/f(x)\in[C^{-1},C]$ by assumption. The lemma easily follows by the change of variables formula.
\end{proof}

\begin{corollary}
\label{cor:mu}
 Let $\mu$ be a non-zero measure on $(0,\infty)$ satisfying for some $\alpha\in\R$, $\mu(A) = m^{-\alpha}\mu(mA)$ for all Borel $A\subset(0,\infty)$. Then for every $z\in(0,1)$,
 \[
     \int_0^\infty (e^{(H(z)-1)x}-e^{-x})\,\mu(dx) < \infty \iff \alpha < 1.
 \]
\end{corollary}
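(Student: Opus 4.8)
The plan is to turn the integral $\int_0^\infty\big(e^{(H(z)-1)x}-e^{-x}\big)\,\dd\mu(x)$ into a single geometric series over the rings $[m^n,m^{n-1})$, $n\in\Z$, using the self-similarity of $\mu$, and to control the integrand by its elementary asymptotics. First I would fix $z\in(0,1)$ and set $a:=1-H(z)$ and $f(x):=e^{(H(z)-1)x}-e^{-x}=e^{-ax}-e^{-x}$. Since $\nu_{\text{min}}$ is a probability measure on $\N^*$, its generating function $H$ is strictly increasing on $[0,1]$ with $H(0)=0$ and $H(1)=1$; hence $a\in(0,1)$, which yields the three facts I will use: $f>0$ on $(0,\infty)$ (because $a<1$); $f(x)\sim(1-a)x$ as $x\downarrow0$, so $f(x)\asymp x$ uniformly on $(0,1]$; and $0<f(x)\le e^{-ax}$ everywhere, so $f$ decays faster than any exponential at $+\infty$. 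In particular $\int_0^\infty f\,\dd\mu$ is a well-defined element of $[0,\infty]$, and I will split it as $\int_0^1 f\,\dd\mu+\int_1^\infty f\,\dd\mu$.

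Next I would record how much mass $\mu$ places on each ring. Iterating the self-similarity relation, and using that $\mu([m,1))\in(0,\infty)$ (finite since $\mu$ is locally finite; positive since $\mu\ne0$ forces some ring — hence, by self-similarity, every ring — to have positive mass), one gets $\mu([m^n,m^{n-1}))\asymp m^{-\alpha n}$ with implied constants uniform in $n\in\Z$. Equivalently, one may write $\mu(\dd x)=x^{-\alpha}\Lambda(\dd x)$ for a genuinely scale-invariant measure $\Lambda$ (i.e.\ $\Lambda(A)=\Lambda(mA)$), which is the form in which such $\mu$ enter Theorem~\ref{th:main}. Since moreover $x\asymp m^n$ on the $n$-th ring, the integral over each ring becomes elementary to estimate.

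For the tail $\int_1^\infty f\,\dd\mu$ I would bound $f(x)\le e^{-ax}$ and sum over the rings contained in $[1,\infty)$, i.e.\ those with index $n\le0$; on such a ring $x\ge m^n=m^{-|n|}$, so its contribution is $\lesssim m^{-\alpha n}e^{-am^{-|n|}}$, and $\sum_{n\le0}m^{-\alpha n}e^{-am^{-|n|}}<\infty$ for every $\alpha\in\R$, because $m^{-|n|}$ grows super-exponentially in $|n|$ while the prefactor is merely geometric. Hence $\int_1^\infty f\,\dd\mu<\infty$ always, and the tail plays no role in the dichotomy. For the bulk $\int_0^1 f\,\dd\mu$ I would use $f(x)\asymp x$ to reduce to $\int_0^1 x\,\dd\mu=\sum_{n\ge1}\int_{[m^n,m^{n-1})}x\,\dd\mu\asymp\sum_{n\ge1}m^n\cdot m^{-\alpha n}=\sum_{n\ge1}(m^{1-\alpha})^n$, which is finite iff $m^{1-\alpha}<1$, i.e.\ iff $\alpha<1$, and whose summands do not even tend to $0$ when $\alpha\ge1$. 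Combining the two pieces gives $\int_0^\infty f\,\dd\mu<\infty\iff\alpha<1$. A clean alternative for the bulk is to invoke Lemma~\ref{lem:Lambda}: the function $x\mapsto f(x)x^{-\alpha}$, restricted to $(0,1)$, is comparable within each ring it meets, so the lemma reduces $\int_0^1 f(x)x^{-\alpha}\,\Lambda(\dd x)$ to $\int_0^1 f(x)x^{-\alpha-1}\,\dd x\asymp\int_0^1 x^{-\alpha}\,\dd x$.

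I do not expect a genuine obstacle here — the argument is essentially bookkeeping with a geometric decomposition. The one conceptual point worth emphasizing is that the behaviour of $\mu$ near $+\infty$ never causes divergence: even when $\mu$ carries a great deal of mass far out (for $\alpha>0$), that mass spreads only geometrically across successive scales whereas $f$ decays super-exponentially, so the tail contribution is always summable. Consequently the whole question collapses to the integrability of $x$ near the origin against a measure of ring-mass $\asymp m^{-\alpha n}$: a single geometric series of ratio $m^{1-\alpha}$, convergent precisely when $\alpha<1$. The only thing needing a little care is getting the direction of the self-similarity relation right so that the ring-mass exponent comes out as $m^{-\alpha n}$, consistent with the representation $\mu(\dd x)=x^{-\alpha}\Lambda(\dd x)$.
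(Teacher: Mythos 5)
Your proof is correct and takes essentially the same route as the paper: the paper packages your $m$-adic ring bookkeeping into Lemma~\ref{lem:Lambda} (comparing $\int f\,d\mu$ with $\int x^{-\alpha-1}f(x)\,dx$ for the test functions $x^{\beta}\Ind_{(x<1)}+x^{-\gamma}\Ind_{(x>1)}$) and then uses exactly your two asymptotics of the integrand, namely $\asymp x$ at the origin and faster-than-any-polynomial decay at infinity. Your reading of the self-similarity hypothesis --- ring mass $\mu([m^n,m^{n-1}))\asymp m^{-\alpha n}$, i.e.\ $\mu(dx)=x^{-\alpha}\Lambda(dx)$ with $\Lambda(A)=\Lambda(mA)$ --- is the one the paper actually uses (and the only one compatible with the stated conclusion), so the care you flag about the direction of the scaling relation is well placed.
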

\begin{proof}
Note that $\mu([m,1)) \ne 0$, otherwise we would have $\mu = 0$ by self-similarity. Define $\Lambda(dx) = x^\alpha \mu(dx)$. Then $\Lambda$ satisfies the hypothesis of Lemma~\ref{lem:Lambda}. Now let $\beta,\gamma \in\R\cup\{+\infty\}$ and set $$f_{\beta,\gamma}(x) = x^\beta\Ind_{(x<1)} + x^{-\gamma}\Ind_{(x>1)},\quad x> 0.$$ By Lemma~\ref{lem:Lambda} applied to the function $x\mapsto x^{-\alpha} f_{\beta,\gamma}(x)$, we have for some $C>1$ (depending on $\Lambda$, $m$, $\alpha$, $\beta$ and $\gamma$),
\[
  \int_0^\infty f_{\beta,\gamma}(x)\,\mu(dx) =  \int_0^\infty x^{-\alpha}f_{\beta,\gamma}(x)\,\Lambda(dx) \asymp_C \int_0^\infty x^{-\alpha-1} f_{\beta,\gamma}(x)\,dx.
\]
In particular, this shows that
\begin{equation}
 \label{eq:finiteness}
 \int_0^\infty f_{\beta,\gamma}(x)\,\mu(dx) < \infty \iff \beta > \alpha\tand \gamma > \alpha,
\end{equation}
with the obvious meaning if $\beta = +\infty$ or $\gamma = +\infty$.

Let $z\in(0,1)$, so that $H(z)\in(0,1)$. We finally consider the integral
\begin{equation}
 \label{eq:finiteness2}
   \int_0^\infty (e^{(H(z)-1)x}-e^{-x})\,\mu(dx).
\end{equation}
For large $x$, the integrand is smaller than any fixed polynomial, so that the integral always converges at $\infty$ by \eqref{eq:finiteness} applied with $\beta=+\infty$ and some $\gamma > \alpha$. On the other hand, as $x\to0$, the integrand is asymptotically equivalent to $H(z) x$. Equation~\eqref{eq:finiteness} applied with $\beta = 1$ and $\gamma = +\infty$ then implies that the integral in \eqref{eq:finiteness2} converges at the origin if and only if $\alpha < 1$. These two facts prove the corollary.
\end{proof}

\begin{proof}[Proof of Theorem~\ref{th:main}]
Although we could restrict ourselves to the pure death case, i.e.\ $F(z) = 1-m(1-z)$ (see Section~\ref{sec:history}), we prove the theorem immediately in its generality.

We first introduce some notation. Let $Y_n$ denote a random variable with the law of $Z_n$ under $\P_1(\cdot\,|\,Z_n > 0)$. By \eqref{eq:yaglom}, $Y_n$ converges in law to the Yaglom distribution $\nu_{\text{min}}$, in particular, $H_n(z) = \E[z^{Y_n}] \to H(z)$ as $n\to\infty$. Note that the inverse $H^{-1}$ exists on $[0,1]$ and is continuous. We further define $p_n = \P_1(Z_n > 0)$ for $n\in\N$ and note that $p_{n+1}/p_n\to m$ as $n\to\infty$ by \eqref{eq:pn}.

Now let $\nu$ be an $m^\alpha$-invariant measure, $\alpha\in\R$. Denote by $G(z) = \sum_{n=1}^\infty \nu(n)z^n$ its generating function, which is finite and well-defined for $|z|<1$ by Lemma~\ref{lem:tail}. We will extend the notation $\P_x$ and $\E_x$ to the (possibly infinite) measure $\nu$ by $\P_\nu(\cdot) = \sum_n \nu(n)\P_n(\cdot)$ and $\E_\nu[\cdot] = \sum_n \nu(n)\E_n[\cdot]$. 

Define the random variable $N_n$ to be the number of individuals at time $0$ which have a descendant at time $n$. Then $N_n > 0$ iff $Z_n > 0$. Furthermore, by the branching property, $Z_n$ is equal in law (under $\P_k$ for every $k\in\N^*$) to $Y^{(1)}_n+\cdots+Y^{(N_n)}_n$, where the variables $Y^{(i)}_n$ are iid copies of $Y_n$ and independent of $N_n$. Hence, as $n\to\infty$, by the $m^\alpha$-stationarity of $\nu$,
\begin{align}
\nonumber
m^{-\alpha n}\E_\nu[z^{N_n}\Ind_{N_n > 0}] &= m^{-\alpha n} \E_\nu[H_n^{-1}(z)^{Z_n}\Ind_{Z_n > 0}] = \E_\nu[H_n^{-1}(z)^{Z_0}]\\
\label{eq:apple}
&=  G(H_n^{-1}(z))
\to G(H^{-1}(z)),\quad \tas n\to\infty.
\end{align}

Now note that under $\P_k$, $N_n$ is binomially distributed with parameters $k$ and $p_n$ for every $k\in\N^*$.
In particular,
\begin{align}
\label{eq:didi}
\E_\nu[z^{N_n}\Ind_{N_n > 0}] = \E_\nu[z^{N_n} - 0^{N_n}] = \E_\nu[(1-p_n(1-z))^{Z_0} - (1-p_n)^{Z_0}].
\end{align}
Defining for every $n\in\N$ the measure $\mu_n$ by $\mu_n(A) = m^{-\alpha n}\nu(p_n^{-1}A)$ for Borel $A\subset(0,\infty)$, we thus get by \eqref{eq:apple} and \eqref{eq:didi},
\begin{equation}
\label{eq:zN}
 G(H^{-1}(z)) = \lim_{n\to\infty} \int_0^\infty \big((1-p_n(1-z))^{x/p_n} - (1-p_n)^{x/p_n}\big)\,\mu_n(dx).
\end{equation}
With the first inequality in Lemma~\ref{lem:ineq} and the fact that $p_n\to0$ as $n\to\infty$, this gives
\begin{equation}
 \label{eq:domination}
 \forall w\in(0,1):\quad\sup_n \int_0^\infty \big(e^{(w-1)x}-e^{-x}\big)\,\mu_n(dx) < \infty.
\end{equation}
Using \eqref{eq:domination} with $w=1/2$, say, gives that the sequence of measures $\widetilde\mu_n(dx) = xe^{-x}\mu_n(dx)$ is tight and therefore, by Prokhorov's theorem, precompact in the space of finite measures on $[0,\infty)$ endowed with weak convergence.
Let $\widetilde\mu$ be a subsequential limit and define the measure $\mu(dx) = x^{-1}e^x\widetilde\mu_{(0,\infty)}(dx)$, where $\widetilde\mu_{(0,\infty)}$ is the restriction of $\widetilde\mu$ to $(0,\infty)$. We claim that
\begin{equation}
 \label{eq:limit}
 G(z) = \int_0^\infty (e^{(H(z)-1)x}-e^{-x})\,\mu(dx) + \widetilde\mu(0) H(z).
\end{equation}
Indeed, fix $z\in(0,1)$ and denote by $g_{z,n}(x)$ the integrand on the right-hand side of \eqref{eq:zN}. Then the function $x\mapsto g_{z,n}(x)/(xe^{-x})$,  continuously extended to $[0,\infty)$, converges uniformly on every compact subset of $[0,\infty)$ to the function $\widetilde g_z$ defined by $\widetilde g_z(x) = (e^{zx}-1)/x$ for $x>0$ and $\widetilde g_z(0) = z$. Using \eqref{eq:domination} with some $w\in(z,1)$ together with the second inequality in Lemma~\ref{lem:ineq}, a truncation argument then shows that we can pass to the (subsequential) limit inside the integral in \eqref{eq:zN}, which yields 
\[
 G(H^{-1}(z)) = \int_0^\infty \widetilde g_z(x)\,\widetilde\mu(dx).
\]
This yields \eqref{eq:limit}. Furthermore, the theory of Laplace transforms gives that $\mu$ and $\widetilde\mu(0)$, hence $\widetilde\mu$, are uniquely determined by \eqref{eq:limit}, so that $\widetilde\mu_n$ converges in fact weakly to $\widetilde\mu$. As a consequence, $\mu_n$ converges vaguely on $(0,\infty)$ to $\mu$.

The scaling properties of the measure $\mu$ follow from this convergence: we have for every compact interval $A\subset(0,\infty)$ whose endpoints are not atoms of $\mu$,
\[
\mu(A) = \lim_{n\to\infty} m^{-\alpha (n+1)} \nu(p_{n+1}^{-1}A) = m^{-\alpha} \lim_{n\to\infty} m^{-\alpha n} \nu(p_n^{-1}(p_n/p_{n+1})A) = m^{-\alpha} \mu(m^{-1}A),
\]
since $p_{n+1}/p_n\to m$ by \eqref{eq:pn}. This implies that the measure $\Lambda$ defined by $\Lambda(dx) = x^\alpha \mu(dx)$ satisfies $\Lambda(A) = \Lambda(mA)$ for every Borel set $A$. 

It remains to investigate which terms in \eqref{eq:limit} vanish for particular values of $\alpha$. A first constraint comes from the fact that $G(z)$ is finite for every $z\in(0,1)$ by Lemma~\ref{lem:tail}, and so the integral in \eqref{eq:limit} needs to be finite as well. By Corollary~\ref{cor:mu}, this is true if and only if $\alpha < 1$ or $\mu = 0$.

A second constraint comes from the fact that $G$ satisfies \eqref{eq:gf_measure} with $\lambda = m^\alpha$. To verify this, we first recall the following equations for the function $H$:
\begin{align}
\label{eq:H1}
  H(F(z)) - H(F(0)) &= m H(z),\quad |z|\le 1.\\
  \label{eq:H2}
  H(F(0)) &= 1-m\\
  \label{eq:H3}
  H(F(z)) - 1 &= m(H(z) - 1),\quad |z|\le 1.
\end{align}
Indeed, \eqref{eq:H1} is an immediate consequence of \eqref{eq:gf_measure} (with $\lambda = m$) and the finiteness of $H$ for $|z| = 1$, \eqref{eq:H2} follows from \eqref{eq:H1} by setting $z=1$, and \eqref{eq:H3} follows from \eqref{eq:H1} and \eqref{eq:H2} by reordering terms. We now have by \eqref{eq:limit}, for every $z\in(0,1)$,
\begin{align*}
 &G(F(z)) - G(F(0)) \\
 &= \int_0^\infty [e^{(H(F(z)) - 1)x} - e^{-x} - e^{(H(F(0)) - 1)x} + e^{-x}]\,\mu(dx) + \widetilde\mu(0) (H(F(z)) - H(F(0)))\\
 &= \int_0^\infty [e^{m(H(z) - 1)x} - e^{mx}]\,\mu(dx) + m\widetilde\mu(0) H(z)\qquad \text{(by \eqref{eq:H3},\eqref{eq:H2},\eqref{eq:H1} (in this order))}\\
 &= m^\alpha \int_0^\infty [e^{(H(z) - 1)x} - e^{x}]\,\mu(dx) + m\widetilde\mu(0) H(z)\qquad  \text{(by self-similarity of $\mu$)}.
\end{align*}

Comparing with \eqref{eq:gf_measure}, this implies that $\widetilde\mu(0) = 0$ unless $\alpha = 1$. Summing up, we have the following constraints for the quantities in \eqref{eq:limit}:
\begin{itemize}
 \item $\alpha > 1$: $\mu = 0$ and $\widetilde\mu(0) = 0$
 \item $\alpha = 1$: $\mu = 0$
 \item $\alpha < 1$: $\widetilde\mu(0) = 0$. 
\end{itemize}
This proves the necessity part of the theorem.

For the sufficiency, we only need to consider the case $\alpha < 1$. Let $G$ be a function given by \eqref{eq:G_rep} with $\Lambda$ a measure on $(0,\infty)$ satisfying $\Lambda(A) = \Lambda(mA)$ for every Borel set $A$. By the above calculations, one readily shows that $G$ satisfies \eqref{eq:gf_measure} with $\lambda = m^\alpha$. It remains to show that $G$ is the generating function of a (locally finite) measure on $\N^*$. Now, for every $x\in(0,\infty)$, the function $z\mapsto e^{(H(z)-1)x} - e^{-x}$ is the generating function of the sum of $\mathcal N$ iid random variables distributed according to $\nu_{\text{min}}$, where $\mathcal N \sim \Poi(x)$, restricted on the event that this sum is positive (see also Section~\ref{sec:subordinators}). Hence, $G$ is an integral over a family of generating functions and thus the generating function of a (not necessarily locally finite) measure. But by Corollary~\ref{cor:mu}, $G(z)$ is finite for $z\in(0,1)$, so that this measure is indeed locally finite. This finishes the proof of the sufficiency part of the theorem.
\end{proof}

\begin{proof}[Proof of Theorem~\ref{th:qsd}]
 Let $\nu$ be a non-trivial $m^\alpha$-invariant measure of the BGW process $Z$, $\alpha \le 1$. By Theorem~\ref{th:main}, it remains to show that $\nu$ is finite if and only if $\alpha\in(0,1]$. For $\alpha = 1$ this is immediate, suppose therefore that $\alpha<1$. Denote by $G$ the generating function of the measure $\nu$ and let $\Lambda$ be the measure from Theorem~\ref{th:main}. Then Lemma~\ref{lem:Lambda} easily implies that the integral $\int_0^\infty (1-e^{-x})x^{-\alpha}\,\Lambda(dx)$ converges at the origin for all $\alpha < 1$ but converges at $\infty$ if and only if $\alpha \in(0,1]$. Hence, $\|\nu\| = G(1) < \infty$ if and only if $\alpha \in (0,1]$. This proves the theorem.
\end{proof}

\begin{proof}[Proof of Theorem~\ref{th:true_main}]
The first two parts are known, see the discussion before the statement of the theorem. The third part can be proven by adapting the proof of Theorem~\ref{th:main}. Alternatively, it can be derived from Theorem~\ref{th:main} as follows: Let $\lambda = m^\alpha > 1$ (hence, $\alpha < 0$). Let $\nu$ be a measure on $\N$ and denote by $\nu^*$ its restriction to $\N^*$. Denote by $G$ and $G^*$ the generating functions of $\nu$ and $\nu^*$, respectively, note that $G^* = G - G(0)$. Since $0$ is an absorbing state for the process $Z$, the measure $\nu$ is a true $\lambda$-invariant measure for $Z$ if and only if the following two statements hold:
\begin{enumerate}
	\item $\nu^*$ is a $\lambda$-invariant measure for $Z$.
	\item $\nu P(0) = \lambda \nu(0)$, equivalently, $G(F(0)) = \lambda G(0)$.
\end{enumerate}
By Theorem~\ref{th:main}, the first statement is equivalent to
\[
G(z) = \int_0^\infty e^{(H(z)-1)x}\frac 1 {x^\alpha}\,\Lambda(dx) + C,
\]
for some constant $C$ and $\Lambda$ as in the statement of Theorem~\ref{th:main} (it can easily be seen that the integral converges using Lemma~\ref{lem:Lambda}, as in the proof of Corollary~\ref{cor:mu}). Together with \eqref{eq:H2} and the self-similarity of $\Lambda$, this gives
\[
G(F(0)) = \int_0^\infty e^{-mx}\frac 1 {x^\alpha}\,\Lambda(dx) + C = \lambda \int_0^\infty e^{-x}\frac 1 {x^\alpha}\,\Lambda(dx) + C = \lambda G(0) + C.
\]
Hence, given the first statement, the second statement is equivalent to $C=0$, which proves the theorem.
\end{proof}

\section{Discussion}
\label{sec:discussion}

\subsection{The Kesten--Spitzer formula for invariant measures and the minimal Martin entrance boundary}
\label{sec:martin}

To our knowledge, Theorem~\ref{th:main}, and more specifically Formula \eqref{eq:G_rep}, was previously known only for $\lambda = 1$ (i.e., $\alpha=0$). In this case, one simply says \emph{invariant} instead of $1$-invariant. 
In the literature (Kesten--Spitzer \cite{Spitzer1967}, Athreya--Ney~\cite[p.\,69]{Athreya1972}, Hoppe \cite{Hoppe1977}; see Section~\ref{sec:history} below for the history of the result), one generally finds this result under the following form: A function $Q(z)$ is the generating function of an invariant measure for the BGW process $Z$ if and only if  there exists a constant $c\ge0$ and a probability measure $\mu$ on $[0,1)$, such that 
\begin{equation}
\label{eq:Q_rep2}
Q(z) = c \int_0^1 \sum_{n=-\infty}^\infty [\exp((H(z)-1)m^{n-t}) - \exp(-m^{n-t})]\,\mu(dt).
\end{equation}
This is the Choquet decomposition of $Q(z)$ as a convex combination of generating functions of extremal invariant measures.
One easily sees that \eqref{eq:G_rep} (with $\alpha=0$) and \eqref{eq:Q_rep2} are equivalent: Given $c\ge0$ and a measure $\mu$ such that \eqref{eq:Q_rep2} holds, we can define a measure $\Lambda$ on $[1,m^{-1})$ as the push-forward of the measure $c\mu$ by the map $t\mapsto m^{-t}$. The measure $\Lambda$ can then be uniquely extended to $(0,\infty)$ in such a way that $\Lambda(A) = \Lambda(mA)$ for every Borel set $A$. One easily checks that \eqref{eq:G_rep} holds with this $\Lambda$ and $\alpha=0$. Conversely, given such a measure $\Lambda$, one can define a finite measure $\widetilde\mu$ on $[0,1)$ as the push-forward of the measure $\Lambda(\cdot \cap [1,m^{-1}))$ by the inverse map $x\mapsto \log_{m^{-1}} x$. Setting $c=\widetilde\mu([0,1))$ and $\mu = \widetilde\mu/c$ gives \eqref{eq:Q_rep2}.

We now relate formula \eqref{eq:Q_rep2} to Martin boundary theory, see \cite[Chapter~10]{Kemeny1976} for an introduction to this theory\footnote{Another very good and more modern introduction is \cite[Chapter~IV]{Woess2000}. He only considers Martin \emph{exit} boundaries but one can reduce to this case in our setting by considering the transition matrix $\hat P_{ij} = \nu_{\text{min}}(j)P_{ji}/\nu_{\text{min}}(i)$ instead of $P$.}.
We briefly recall the basic constructions of interest to us. Let $P$ be the transition matrix of a transient sub-Markov chain on $\N^*$. Define the Green kernel $G(x,y) = \sum_{k=0}^\infty (P^k)_{xy}$ and assume there is a state $o\in\N^*$ such that $G(x,o) > 0$ for all $x\in\N^*$ (in the case of subcritical BGW processes killed at $0$, we choose $o$ to be the span of the reproduction law). This allows to define the Martin kernel by $K(x,y) = G(x,y)/G(x,o)$. The \emph{Martin entrance compactification} of $\N^*$ is then defined as the smallest compactification $\mathcal M$ of the discrete set $\N^*$ such that all measures $K(x,\cdot)$ extend continuously (w.r.t.\  pointwise convergence of measures seen as functions on $\N^*$).  Every point $\xi$ on the \emph{Martin entrance boundary} $B = \mathcal M\backslash \N^*$ thus defines an invariant measure $K(\xi,\cdot)$ with mass 1 at $o$. Moreover, every \emph{extremal} invariant measure, meaning that it can not be written as a non-trivial convex combination of invariant measures, arises this way. The set of those points $\xi\in B$ for which $K(\xi,\cdot)$ is extremal is called the \emph{minimal} Martin entrance boundary, denoted by $B^*$. The \emph{Poisson-Martin integral formula} now assigns to every invariant measure $\nu$ a unique integral representation in terms of extremal invariant measures, namely,
\[
\nu = \int_{B^*} K(\xi,\cdot)\,\mu^\nu(d\xi),
\]
for a finite measure $\mu^\nu$ on $B^*$. 

The construction outlined in the previous paragraph is the approach used by Kesten and Spitzer \cite{Spitzer1967} to derive formula \eqref{eq:Q_rep2} (for pure death processes). In particular, their proof implies that the extremal invariant measures of a subcritical BGW process are (up to multiplicative constants) the measures $\nu_t$, $t\in S^1 = [0,1]_{0\sim 1}$, with generating functions
\[
\sum_{k=1}^\infty \nu_t(k) z^k = \sum_{n=-\infty}^\infty [\exp((H(z)-1)m^{n-t}) - \exp(-m^{n-t})].
\]
Defining $\xi_t\in B^*$ by $\nu_t = K(\xi_t,\cdot)$, the map $t\mapsto \xi_t$ is thus (by extremality) a bijection between the compact space $S^1$ and $B^*$, moreover, one easily sees that it is continuous. \emph{It follows that the minimal Martin entrance boundary $B^*$ is homeomorphic to the circle $S^1$.}

Now let $\lambda > 0$. The above construction can be performed with the operator $P/\lambda$ instead of $P$, giving rise to a $\lambda$-boundary theory for all $\lambda$ such that the $\lambda$-Green function $G_\lambda = \sum_{k=0}^\infty \lambda^{-k}(P^k)_{xy}$ is finite. The infimum of these values of $\lambda$ is the \emph{spectral radius} $\rho = \lim_{k\to\infty} ((P^k)_{oo})^{1/k}$ \cite[Chapter~II]{Woess2000}, which equals $m$ for subcritical BGW processes by \eqref{eq:yaglom} and \eqref{eq:pn}. For $\lambda > \rho = m$, Theorem~\ref{th:main} then implies a formula similar to \eqref{eq:Q_rep2}. A reasoning as  in the last paragraph yields the following:
\begin{corollary}
\label{cor:stable}
For every $\lambda > m$, the minimal $\lambda$-Martin entrance boundary of the BGW process $(Z_n)_{n\ge 0}$ is homeomorphic to the circle $S^1$.
\end{corollary}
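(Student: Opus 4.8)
The plan is to carry out, for general $\lambda>m$, the very reasoning used above for $\lambda=1$, with the integral representation of Theorem~\ref{th:main} playing the role of the Kesten--Spitzer formula. Fix $\lambda>m$ and write $\lambda=m^\alpha$ with $\alpha=\log_m\lambda\in(-\infty,1)$. Since the spectral radius of $P$ equals $m$, the $\lambda$-Green function is finite, so the $\lambda$-Martin entrance boundary of $Z$ — obtained by performing the construction recalled above with the operator $P/\lambda$ in place of $P$ — is well defined; denote its minimal part by $B^*_\lambda$. The goal is to produce an explicit homeomorphism from a circle onto $B^*_\lambda$.

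The first and principal step is to identify the extremal $\lambda$-invariant measures of $Z$ up to positive scalars. By Theorem~\ref{th:main}(3), the assignment $\Lambda\mapsto\nu$ defined by \eqref{eq:G_rep} is a \emph{linear bijection} from the cone $\mathcal D$ of locally finite measures $\Lambda$ on $(0,\infty)$ with $\Lambda(A)=\Lambda(mA)$ for all Borel $A$, onto the cone of $\lambda$-invariant measures of $Z$: linearity is immediate from \eqref{eq:G_rep}, surjectivity and injectivity are the ``if and only if'' and uniqueness assertions of Theorem~\ref{th:main}(3), and Corollary~\ref{cor:mu} ensures the integral in \eqref{eq:G_rep} is finite. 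A linear bijection of cones preserves extreme rays, so it suffices to describe the extreme rays of $\mathcal D$. A measure in $\mathcal D$ is determined by its restriction to a fundamental domain of the multiplicative action of $m^{\Z}$ on $(0,\infty)$, say $[m,1)$; that domain is relatively compact, so the restriction is a finite measure, and conversely every finite measure on $[m,1)$ extends uniquely to a member of $\mathcal D$. Hence $\mathcal D$ is linearly isomorphic to the cone of finite measures on the quotient $\Theta:=(0,\infty)/m^{\Z}$, which via $x\mapsto\log x$ is homeomorphic to the circle $S^1$. By the standard description of the extreme rays of the cone of finite measures on a compact metric space, these rays are the Dirac masses; transported back to $\mathcal D$ they become the single-orbit measures $\Lambda_\theta=\sum_{n\in\Z}\delta_{m^n x_\theta}$, $\theta\in\Theta$, where $x_\theta\in[m,1)$ represents $\theta$. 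Consequently the extreme rays of the cone of $\lambda$-invariant measures are spanned by the measures $\nu_\theta$ with generating function
\[
G_\theta(z)=\sum_{n\in\Z}\bigl(e^{(H(z)-1)m^n x_\theta}-e^{-m^n x_\theta}\bigr)(m^n x_\theta)^{-\alpha},
\]
and $\theta\mapsto\nu_\theta$ is a bijection of $\Theta$ onto the set of extremal $\lambda$-invariant measures modulo positive scalars (for $\alpha=0$ one recovers the measures $\nu_t$ of the previous subsection).

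Next I would match this parametrisation with $B^*_\lambda$. A one-line computation from \eqref{eq:G_rep} shows that, with $o$ the span of the offspring law, the coefficient of $z^o$ in $G_\theta$ equals $\nu_{\text{min}}(o)\sum_{n\in\Z}(m^n x_\theta)^{1-\alpha}e^{-m^n x_\theta}$, which is strictly positive since $\alpha<1$ and $\nu_{\text{min}}(o)>0$; thus every $\nu_\theta$ has positive mass at $o$. By the general $\lambda$-Martin boundary theory recalled above, the extremal $\lambda$-invariant measures of mass one at $o$ are exactly the $K(\xi,\cdot)$, $\xi\in B^*_\lambda$, and $\xi\mapsto K(\xi,\cdot)$ is a bijection onto them; together with the previous paragraph this yields a bijection $\iota:\Theta\to B^*_\lambda$ characterised by $K(\iota(\theta),\cdot)=\nu_\theta/\nu_\theta(o)$. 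Since the Martin topology on the compactification $\mathcal M$ is by construction the one induced by the injective embedding $\xi\mapsto K(\xi,\cdot)$ into $\R^{\N^*}$ with its product (pointwise) topology, to see that $\iota$ is continuous it suffices to check that $\theta\mapsto\nu_\theta(k)$ is continuous on $\Theta$ for every $k\in\N^*$, continuity of the normalisation by $\nu_\theta(o)>0$ following. This is immediate: extracting the coefficient of $z^k$ from the series above term by term writes $\nu_\theta(k)$ as a sum over the orbit of $x_\theta$ of continuous functions of $x_\theta$, converging locally uniformly and invariant under $x_\theta\mapsto m x_\theta$ by self-similarity of $\Lambda_\theta$, hence descending to a continuous function on $\Theta\cong S^1$. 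Therefore $\iota$ is a continuous bijection from the compact space $\Theta\cong S^1$ onto the Hausdorff space $B^*_\lambda\subseteq\mathcal M$, hence a homeomorphism, which is the assertion of the corollary.

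The delicate point is the extreme-ray identification in the second paragraph: one must make sure that $\Lambda\mapsto\nu$ is a genuine isomorphism of convex cones, so that extremality of a $\lambda$-invariant measure in the Choquet sense — the notion the Martin boundary sees — corresponds exactly to $\Lambda$ spanning an extreme ray of $\mathcal D$, and that the extreme rays of $\mathcal D$ are precisely the single-orbit measures, which is where the circle comes from, via the relative compactness of a fundamental domain for the dilation group. The remaining ingredients — positivity of the mass at $o$, coefficientwise continuity of the parametrisation, and the fact that a continuous bijection from a compact space onto a Hausdorff space is a homeomorphism — are routine.
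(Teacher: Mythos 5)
Your proposal is correct and follows essentially the same route as the paper, which only sketches the argument: identify the extremal $\lambda$-invariant measures via the integral representation of Theorem~\ref{th:main} as the single-orbit measures parametrised by the circle $(0,\infty)/m^{\Z}$, match them with the minimal boundary through the Martin kernel, and conclude by continuity plus compactness. Your write-up merely fills in the extreme-ray identification and the normalisation at $o$ that the paper leaves implicit.
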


In particular, Corollary \ref{cor:stable} shows that all minimal $\lambda$-Martin entrance boundaries, $\lambda > m$, are homeomorphic. This remarkable fact is part of a property called \emph{stability} by some authors \cite[p301]{Woess2000} and holds true for example for the (exit) boundary of random walks on trees and hyperbolic graphs. We know of no general theory that yields this result without explicitly calculating the $\lambda$-Martin entrance boundaries for every $\lambda$.

We finish this section with a discussion of the case $\lambda = m$, for which Theorem~\ref{th:main} gives that the minimal $m$-Martin entrance boundary is trivial, i.e. there exists up to multiplicative constants only one $m$-invariant measure. This fact is quite common and holds in general for example if the process is $m$-recurrent, i.e. if $G_m(x,y) = \infty$ for all (some) $x,y$ \cite[Chapter~IV]{Woess2000}. Note that $m$-recurrence is equivalent to recurrence of the so-called $Q$-process, which is in our case the Markov process with transition matrix $Q$ given by $Q_{ij} = m^{-1}P_{ij} j/i$ (recall that the function $h(i) = i$ is $m$-harmonic for our process, i.e. $Ph = mh$). It is remarkable that in our setting the $Q$-process may be positive recurrent, null recurrent or transient. This fact does not seem to appear in the usually cited  monographs on branching processes\footnote{It was even claimed in the literature that recurrence always holds \cite[p972]{Pakes1999}.}, only a criterion for positive recurrence is easy to find (see e.g. \cite[p59]{Athreya1972}): the $Q$-process is positive recurrent if and only if $\E_1[Z_1\log Z_1] < \infty$. However, Joffe proved in 1967 already the following recurrence criterion \cite{Joffe1967}: Let $F(z)$ denote the generating function of the offspring distribution and define $\eta$ by $1-F(z) = m(1-z)(1-\eta(z))$. Set $q_n = \P_1(Z_n = 0)$. Then the $Q$-process is recurrent if and only if the following sum diverges:
\[
 \sum_{n=1}^\infty\prod_{k=1}^{n} (1-\eta(q_k)).
\]
Since $1-q_n = m^{n+o(n)}$ by \eqref{eq:pn}, one can easily construct examples where the above sum converges (so that the $Q$-process is transient), for example when $\eta(z)\ge 1/|\log(1-z)|^\beta$ for some $\beta < 1$ and $z$ close to $1$.


\subsection{Probabilistic interpretation of \texorpdfstring{\eqref{eq:G_rep}}{(\ref{eq:G_rep})} and relation with semi-stable subordinators}
\label{sec:subordinators}

Let $\nu$ be a QSD of eigenvalue $m^\alpha$ of the BGW process, $\alpha\in(0,1)$. By Theorem~\ref{th:qsd}, it admits the representation \eqref{eq:G_rep} with a measure $\Lambda$ as in the statement of the theorem. Let $\mathcal N$ be a random variable whose generating function is equal to the right-hand side of \eqref{eq:G_rep}, but with $H(z) \equiv z$. Then $\nu$ is the law of the sum of $\mathcal N$ iid random variables distributed according to the Yaglom distribution $\nu_{\text{min}}$. As for the law of $\mathcal N$, 
expanding the exponential in \eqref{eq:G_rep} gives
\begin{equation}
\label{eq:N}
\forall k\ge 1: \P(\mathcal N = k) = \int_0^\infty e^{-x}\frac{x^k}{k!}\frac 1 {x^\alpha}\,\Lambda(dx),\quad \P(\mathcal N = 0) = 0.
\end{equation}
Heuristically, $\mathcal N$ is therefore a Poisson-distributed random variable with a random parameter drawn according to the measure $x^{-\alpha}\,\Lambda(dx)$ and conditioned to be non-zero. A way to make this rigorous (note that the measure $x^{-\alpha}\Lambda(dx)$ has infinite mass!) is using \emph{subordinators}, of which we first recall the basic facts.

A subordinator $S = (S_t)_{t\ge 0}$ is a real-valued, non-decreasing process with stationary and independent increments. We always assume $S_0 = 0$. Then the law of $S$ is determined by its \emph{cumulant} $\kappa_S(\theta) = -\log \E[e^{-\theta S_1}]$, which satisfies the \emph{L\'evy--Khintchine} formula (see e.g.\ \cite[Ch.\,13]{Kallenberg1997} or \cite{Bertoin1996}),
\begin{equation}
\label{eq:kappa}
 \kappa_S(\theta) = a\theta + \int_0^\infty (1-e^{-\theta x})M(dx),
\end{equation}
where $a\ge0$ is called the \emph{drift} and $M$ is a measure on $(0,\infty)$ called the \emph{L\'evy measure} of the subordinator and satisfying $\int_0^\infty (1-e^{-x}) M(dx) < \infty$.

If $T = (T_t)_{t\ge0}$ is another subordinator (or, in general, a L\'evy process) independent of $(S_t)_{t\ge0}$, then the \emph{subordinated process} $T\circ S := (T_{S_t})_{t\ge0}$ is again a subordinator (L\'evy process) with cumulant
\begin{equation}
\label{eq:kappa_sub}
 \kappa_{T\circ S} = \kappa_S\circ\kappa_T.
\end{equation}
If $N = (N_t)_{t\ge0}$ is a driftless subordinator whose L\'evy measure is a probability measure on $\N^*$, then the subordinators $N$ and $N\circ S$ both take values in $\N^*$ and $N\circ S$ is therefore again a driftless subordinator with L\'evy measure concentrated on $\N^*$. Let $H$ and $G$ denote the generating functions of the L\'evy measures of $N$ and $N\circ S$, respectively. Note that $H(1) = 1$ and $G(1)<\infty$, because a L\'evy measure on $\N^*$ is necessarily finite. It follows from \eqref{eq:kappa} (applied first to $N\circ S$ and then to $N$) and \eqref{eq:kappa_sub} that
\begin{equation}
 \label{eq:GH}
 G(1) - G(z) = \kappa_{N\circ S}(-\log z) = \kappa_S(1-H(z)).
\end{equation}
Setting $z=0$ yields $G(1) = \kappa_S(1)$. Rearranging \eqref{eq:GH}, we get with \eqref{eq:kappa},
\begin{equation}
 \label{eq:GH2}
 G(z) = \kappa_S(1) - \kappa_S(1-H(z)) = aH(z) + \int_0^\infty (e^{(H(z)-1)x}-e^{-x})\,M(dx).
\end{equation}

We apply the previous equations to the QSD $\nu$, by setting $a = 0$ and $M(dx) = \frac 1 {x^\alpha}\,\Lambda(dx)$, note that $\kappa_S(1) = \int_0^\infty (1-e^{-x})\,M(dx) = 1$. In particular, $M$ is a L\'evy measure, so that the subordinator $S$ is well defined. We also let the L\'evy measure of the subordinator $N$ be $\nu_{\text{min}}$; we recall that its generating function is indeed denoted by $H(z)$. Equation \eqref{eq:GH2} then gives a probabilistic interpretation to the QSD $\nu$: it says that \emph{$\nu$ is the L\'evy measure of the subordinator $N\circ S$ (or, equivalently, the law of its first jump)}.

Note that the case $\alpha = 1$ may also be covered by setting $a = 1$ and $M \equiv 0$ in \eqref{eq:kappa}, i.e. taking the subordinator $S = \mathrm{Id}$.

This fact allows for an alternative statement of Theorem~\ref{th:qsd}. For this, we introduce the notion of a \emph{semi-stable} subordinator: we say that the subordinator $S = (S_t)_{t\ge0}$ is \emph{$(\alpha,m)$-semi-stable}\footnote{This terminology is taken from \cite[Section~9.2]{Embrechts2002}.}, if
\begin{equation}
\label{eq:semistable}
 (S_{m^\alpha t})_{t\ge0} \stackrel{\text{law}}{=} (m S_t)_{t\ge0},
\end{equation}
or, in terms of the cumulant,
\begin{equation}
 \label{eq:ss_cumulant}
 \kappa_S(m \theta) = m^\alpha \kappa_S(\theta),\quad \theta\ge0.
\end{equation}
One easily obtains from \eqref{eq:kappa} and \eqref{eq:ss_cumulant} the following characterization of semi-stable subordinators: A subordinator $S = (S_t)_{t\ge0}$ with drift $a$, L\'evy measure $M$, satisfying $S_0 = 0$ and $S \not\equiv 0$, is $(\alpha,m)$-semi-stable, $\alpha\in\R$, $m\in(0,1)$, if and only if
\begin{itemize}
 \item $\alpha\in(0,1)$, $a = 0$ and $M(dx) = \frac 1 {x^\alpha}\,\Lambda(dx)$ for a measure $\Lambda$ on $(0,\infty)$ satisfying $\Lambda(A) = \Lambda(mA)$ for all Borel $A\subset (0,\infty)$, \emph{or}
 \item $\alpha = 1$, $a > 0$ and $M\equiv 0$.
\end{itemize}

The previous arguments then give the following equivalent statement of Theorem~\ref{th:qsd}:
\begin{theorem}
 \label{th:qsd2}
 The quasi-stationary distributions of eigenvalue $m^\alpha$ of the BGW process, $\alpha\in\R$, are exactly the L\'evy measures of the subordinators $N \circ S$, where $N$ is the driftless subordinator with L\'evy measure $\nu_{\text{min}}$ and $S$ is an $(\alpha,m)$-semi-stable subordinator with $\kappa_S(1) = 1$.
\end{theorem}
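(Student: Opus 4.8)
The plan is to simply package together Theorem~\ref{th:qsd}, the characterization of semi-stable subordinators given just above the statement, and the subordination identity \eqref{eq:GH2}. First I would recall from the discussion preceding the theorem that if $N$ is the driftless subordinator whose L\'evy measure is $\nu_{\text{min}}$ (with generating function $H$) and $S$ is an arbitrary subordinator with drift $a$ and L\'evy measure $M$, then by \eqref{eq:kappa}, \eqref{eq:kappa_sub} and \eqref{eq:GH2} the composed process $N\circ S$ is again a driftless subordinator concentrated on $\N^*$, whose L\'evy measure $\nu$ has generating function
\[
G(z) = aH(z) + \int_0^\infty (e^{(H(z)-1)x}-e^{-x})\,M(dx),
\]
and moreover $\|\nu\| = G(1) = \kappa_S(1)$. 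Thus $N\circ S$ has L\'evy measure a \emph{probability} measure if and only if $\kappa_S(1)=1$.

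Next I would invoke the characterization of $(\alpha,m)$-semi-stable subordinators stated right before the theorem: $S$ (with $S_0=0$, $S\not\equiv0$) is $(\alpha,m)$-semi-stable if and only if either $\alpha\in(0,1)$, $a=0$ and $M(dx)=x^{-\alpha}\Lambda(dx)$ for some $\Lambda$ on $(0,\infty)$ with $\Lambda(A)=\Lambda(mA)$ for all Borel $A$, or $\alpha=1$, $a>0$ and $M\equiv0$. Plugging each of these two cases into the displayed formula for $G$ gives exactly the two forms of generating functions appearing in Theorem~\ref{th:qsd}: in the first case $G(z)=\int_0^\infty (e^{(H(z)-1)x}-e^{-x})\,x^{-\alpha}\Lambda(dx)$, i.e.\ \eqref{eq:G_rep}, and the normalization $\kappa_S(1)=1$ reads $\int_0^\infty(1-e^{-x})x^{-\alpha}\Lambda(dx)=1$, matching conditions (a)--(b) of Theorem~\ref{th:qsd}; in the second case $G(z)=H(z)$, i.e.\ $\nu=\nu_{\text{min}}$, the eigenvalue being $m=m^1$. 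Conversely, given a QSD $\nu$ of eigenvalue $m^\alpha$, Theorem~\ref{th:qsd} tells us $\nu$ has one of these two forms, and reading the construction backwards produces the requisite semi-stable $S$ with $\kappa_S(1)=1$ (in the $\alpha=1$ case, $S=\mathrm{Id}$); the uniqueness of $\Lambda$ in Theorem~\ref{th:qsd}, together with the L\'evy--Khintchine uniqueness of $(a,M)$, shows this $S$ is unique in law. Finally, for $\alpha\notin(0,1]$ there are no $(\alpha,m)$-semi-stable subordinators and no QSD of eigenvalue $m^\alpha$ (again by Theorem~\ref{th:qsd}), so both sides of the claimed correspondence are empty and the statement holds vacuously.

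There is essentially no obstacle here: the theorem is a translation of results already proved, and the only thing to be careful about is bookkeeping — matching the normalization $\kappa_S(1)=1$ with condition (b) of Theorem~\ref{th:qsd}, and remembering to treat the boundary case $\alpha=1$ (the pure-drift subordinator $S=\mathrm{Id}$, giving $\nu=\nu_{\text{min}}$) separately, as is already flagged in the remark ``Note that the case $\alpha=1$ may also be covered by setting $a=1$ and $M\equiv0$.'' One should also note in passing that $\nu_{\text{min}}$, being the L\'evy measure of a subordinator on $\N^*$, is automatically finite, consistent with it being a probability measure.
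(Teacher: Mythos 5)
Your proposal is correct and follows essentially the same route as the paper: the paper's ``proof'' of Theorem~\ref{th:qsd2} is precisely the discussion preceding it, which combines \eqref{eq:kappa}, \eqref{eq:kappa_sub} and \eqref{eq:GH2} with the stated characterization of $(\alpha,m)$-semi-stable subordinators and then invokes Theorem~\ref{th:qsd}, exactly as you do. Your explicit handling of the normalization $\kappa_S(1)=1$, the boundary case $\alpha=1$ via $S=\mathrm{Id}$, the vacuous cases $\alpha\notin(0,1]$, and the uniqueness of $S$ in law is consistent with (and slightly more detailed than) the paper's treatment.
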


\begin{remark}
 One can drop the requirement $\kappa_S(1) = 1$ in the above theorem if one replaces ``are exactly the L\'evy measures'' by ``are exactly the laws of the first jumps''.
\end{remark}

\paragraph{Composition of generating functions}

 Let $G_\alpha$ be the generating function of a QSD of $Z$ with eigenvalue $m^\alpha$, $\alpha\in(0,1]$. Furthermore, let $G_\beta$ be the generation function of an $m^{\alpha\beta}$-invariant measure, $\beta\le 1$, of the pure death process with mean offspring $m^\alpha$, i.e. with $F(z) = 1-m^\alpha(1-z)$. It is easy to see from \eqref{eq:gf_measure} that the composition $G_\beta\circ G_\alpha$ is the generating function of an $m^{\alpha\beta}$-invariant measure of $Z$ (note that the Yaglom distribution of a pure death process is always $\delta_1$, hence its generating function is the identity $z\mapsto z$). If $\Lambda_\alpha$, $\Lambda_\beta$ and $\Lambda_{\alpha\beta}$ are the measures from Theorem~\ref{th:main} corresponding to $G_\alpha$, $G_\beta$ and $G_\beta\circ G_\alpha$, respectively, then one may ask the following question:
 
 \begin{question}
 \label{q:lambda}
  Is there a simple formula expressing $\Lambda_{\alpha\beta}$ in terms of $\Lambda_\alpha$ and $\Lambda_\beta$?
 \end{question}
We were not able to answer this question and are in fact doubtful that the answer is positive in general. In order to rephrase this problem into a more familiar setting, consider the case where $G_\beta$ is the generating function of a probability measure, so that in particular $\beta\in(0,1]$. Let $S^\alpha$ and $S^\beta$ be the $(\alpha,m)$- and $(\beta,m^\alpha)$-semi-stable subordinators associated to $G_\alpha$ and $G_\beta$ by Theorem~\ref{th:qsd2}. In particular, $\kappa_{S^\alpha}(1) = \kappa_{S^\beta}(1) = 1$. By \eqref{eq:GH} and \eqref{eq:kappa_sub}, we then have
\[
 1- G_\beta\circ G_\alpha(z) = \kappa_{S^\beta}(1-G_\alpha(z)) = \kappa_{S^\beta}(\kappa_{S^\alpha}(1-H(z))) = \kappa_{S^\alpha\circ S^\beta}(1-H(z)).
\]
Hence, Question~\ref{q:lambda} is equivalent to the question of whether there is a simple formula expressing the L\'evy measure of $S^\alpha\circ S^\beta$ in terms of the L\'evy measures of $S^\alpha$ and $S^\beta$. To the best of our knowledge, no such formula is known, and, given the fact that the Laplace transform of a measure has no simple inversion formula, there does not seem to be much hope.

 \subsection{History of the problem}
\label{sec:history}

The study of $\lambda$-invariant measures of subcritical BGW processes has a rich history which we aim to elucidate here. The starting point seems to be Yaglom's 1947 article  \cite{Yaglom1947}, who showed the existence of the now-called Yaglom limit of a subcritical BGW process under the assumption of finite variance\footnote{The assumption of finite variance was later removed in \cite{Heathcote1967,Joffe1967}.}. The BGW process appeared again  as an important example in the seminal paper by Seneta and Vere-Jones \cite{Seneta1966} on QSD of Markov processes on (countably) infinite state spaces. In this work, the authors show that subcritical BGW processes admit a one-parameter family of QSD whose generating functions are $1-(1-H(z))^\alpha$, $\alpha\in(0,1]$, with $H(z)$ denoting, as above, the generating function of the Yaglom limit. Rubin and Vere-Jones \cite{Rubin1968} raised the question whether there existed other QSD. They failed to answer the question in general but showed that these QSD where the only ones with regularly varying tails.

These works on QSD of subcritical BGW process seem to have been independent of other works on ($1$-)invariant measures: In 1965, Kingman \cite{Kingman1965} showed that invariant measures for a subcritical  BGW process are not unique, which, as claimed by Kingman, disproved a conjecture by Harris. A full characterization of invariant measures, Formula \eqref{eq:Q_rep2}, was then given by Kesten and Spitzer in 1967 \cite{Spitzer1967} (they also gave credit to H.~Dinges for deriving the formula independently), motivated by the need of finding examples of explicitly calculable Martin boundaries for Markov processes. Spitzer's note only contained a brief sketch of a proof and covered only the pure death case, but he claimed that the method would work as well for arbitrary offspring distributions if $\E[Z_1\log Z_1] < \infty$. A full proof of this fact appeared in Athreya and Ney's well-known monograph \cite[p.\,69]{Athreya1972}, which also covers the Yaglom limit but does not treat QSD in general.

In the 1970's, Hoppe considered again the question of the uniqueness of the QSD with generating functions $1-(1-H(z))^\alpha$, $\alpha\in(0,1]$. As many of the previous works on branching processes, he extensively used generating functions. Starting point was the following equation, which, for the generating function $G$ of a probability measure $\nu$, is easily seen to be equivalent to \eqref{eq:gf_measure}:
\begin{equation}
\label{eq:gf}
1- G(F(z)) = \lambda(1-G(z)).
\end{equation}
Hence, finding all QSD of eigenvalue $\lambda$ amounts to finding all probability generating functions $G$ solving \eqref{eq:gf}. Hoppe \cite{Hoppe1976} showed in 1976 that one can reduce the problem\footnote{He also showed in another article  \cite{Hoppe1977} that this is true for invariant measures as well, which allowed him to prove Formula \eqref{eq:Q_rep2} without additional conditions on the offspring distribution. Note that Formula \eqref{eq:Q_rep2} was again reproven in the general case in \cite{Alsmeyer2006}, the authors of which were apparently unaware of Hoppe's work.} to the pure death case $F(z) = 1-m(1-z)$: He proves that a generating function $G$ satisfies \eqref{eq:gf} with $\lambda=m^\alpha$ if and only if there exists a generating function $A(z)$, such that $G(z) = A(H(z))$ and
\begin{equation}
\label{eq:A_eq}
1- A(1-m(1-z)) = m^\alpha(1-A(z)).
\end{equation}
He also remarks that the general solution $A(z)$ to this equation is of the form 
\begin{equation}
\label{eq:A}
A(z) = 1-(1-z)^\alpha\exp(\psi(-\log(1-z))),
\end{equation}
for a $|\log m|$-periodic function $\psi$ with $\psi(0)=0$. The drawback of this representation, apart from its uncertain probabilistic meaning, is that it is not immediate from \eqref{eq:A} whether the Taylor series of $A(z)$ only has non-negative coefficients, i.e.\ whether $A(z)$ is the generating function of a probability distribution. Hoppe \cite{Hoppe1976} was not even sure whether such a function exists for a non-constant $\psi$. However, one can show (using for example theorems by Flajolet and Odlyzko \cite[Proposition~1]{FO1990}) that for every $c_1,\ldots,c_n$ there exists $c_0 > 0$, such that for $|c| < c_0$, the Taylor expansion at 0 of the function
\[
A(z) = 1-(1-z)^\alpha\exp\left(c\sum_{k=1}^nc_k\sin\left(\frac{2\pi k}{\log m} \log(1-z)\right)\right),
\]
only has non-negative coefficients (a similar reasoning has been used by Kingman in his article cited above \cite{Kingman1965}). The function is therefore a generating function of a probability distribution which is a QSD of the pure death process. This gives an alternative proof of non-uniqueness of the QSD but no satisfying characterization.

In 1980, Hoppe \cite{Hoppe1980} therefore published another representation of solutions of \eqref{eq:A_eq}: He showed that there exists a one-to-one correspondence between QSD and invariant measures of the BGW process. Again, he used functional equations: by \eqref{eq:gf_measure}, a (non-trivial) measure $\nu$ on $\N$ is an invariant measure of the BGW process if and only if  there exists a normalizing constant $c>0$, such that the generating function $Q(z) = \sum_{n=1}^\infty c\nu(n)z^n$ satisfies the functional equation 
\begin{equation}
\label{eq:Q}
Q(F(z)) = 1+Q(z),\quad Q(0)=0.
\end{equation}
Hoppe \cite{Hoppe1980} then showed that for every $\alpha\in(0,1]$, the function\footnote{When checking this formula in \cite{Hoppe1980}, one should be careful about the typographical ambiguity there: the appearances of ``$\log mP(t)$'' should be replaced by ``$(\log m)P(t)$''.}
\begin{equation}
\label{eq:qsd_Q}
G_\alpha(z) = \frac{\int_0^z H'(w) m^{(\alpha-1) Q(w)}\,dw}{\int_0^1 H'(w)m^{(\alpha-1) Q(w)}\,dw}
\end{equation}
is the generating function of a QSD of eigenvalue $m^\alpha$ of the BGW process and conversely, for every such function, setting
\begin{equation}
\label{eq:Q_qsd}
Q(z) = \frac{\log(1-G_\alpha(z))}{\log m^{\alpha}}
\end{equation}
defines a generating function which solves \eqref{eq:Q} (note that this is a special case of the compositions of generating functions studied at the end of Section~\ref{sec:subordinators}). This yields for every $\alpha\in(0,1)$ a bijection between all QSD of eigenvalue $m^{\alpha}$ and all invariant measures and thus apparently solves the problem of characterizing all QSD. However, the non-linear transformations from Equations \eqref{eq:qsd_Q} and \eqref{eq:Q_qsd} do not seem to be easy to tame, for example, we are not aware of any direct way of obtaining a formula like \eqref{eq:G_rep} from \eqref{eq:Q_rep2} using the above formulae. More specifically, we are unable to relate the measures $\Lambda$ in the respective representations of $G_\alpha$ and $Q$ in \eqref{eq:G_rep}, when $G_\alpha$ and $Q$ are related through \eqref{eq:qsd_Q} or \eqref{eq:Q_qsd}. We do not believe that there exists a simple relation between them, similarly to our reservations concerning Question~\ref{q:lambda}. Therefore, to the best of our knowledge, the current article provides a new approach to $\lambda$-invariant measures (and, in particular, quasi-stationary distributions) of subcritical BGW processes, yielding for the first time a complete characterization of these measures involving an explicit formula.

\bibliography{qsd-gw}

\end{document}